\documentclass[11pt, reqno]{amsart}

\usepackage[a4paper,
            top=30mm,bottom=30mm,
            left=35mm,right=35mm]{geometry}

\usepackage{amsmath, amsthm, amsfonts, amssymb, amscd, mathtools, bm}
\usepackage{xcolor}
\usepackage[hidelinks,unicode=true]{hyperref}
\usepackage[shortlabels]{enumitem}
\usepackage[T1]{fontenc}
\usepackage{empheq}
\usepackage{placeins}

\theoremstyle{plain}
\newtheorem{theorem}{Theorem}[section]
\newtheorem{lemma}[theorem]{Lemma}
\newtheorem{corollary}[theorem]{Corollary}
\newtheorem{proposition}[theorem]{Proposition}

\theoremstyle{definition}

\theoremstyle{remark}
\newtheorem{remark}[theorem]{Remark}

\numberwithin{equation}{section}

\newcommand{\dx}{\mathrm{d}x}

\newcommand{\dz}{\mathrm{d}z}
\newcommand{\dt}{\mathrm{d}t}
\newcommand{\dv}{\mathrm{d}v}
\newcommand{\dw}{\mathrm{d}\omega}
\newcommand{\rd}{\mathrm{d}}
\newcommand{\N}{\mathbb{N}}
\newcommand{\R}{\mathbb{R}}

\newcommand{\M}{\mathcal{M}}
\newcommand{\F}{\mathcal{F}}

\title[]{From nonisothermal BGK to Euler Maxwellians \\ via relative entropy}

\author[]{Nuno J. Alves}
\address[N. J. Alves]{
      Applied Mathematics and Computational Sciences (AMCS), Computer, Electrical and Mathematical Sciences and Engineering Division (CEMSE), King Abdullah University of Science and Technology (KAUST), Thuwal, 23955-6900, Kingdom of Saudi Arabia.}
\email{nuno.januarioalves@kaust.edu.sa}

\begin{document}

\begin{abstract}
We study the hydrodynamic limit of the nonisothermal BGK model toward Euler Maxwellians. For a prescribed sufficiently smooth Euler solution, we use relative entropy to compare a BGK solution with the corresponding Euler Maxwellian. The result is conditional: for BGK solutions satisfying a uniform sixth velocity-moment bound and uniform $L^\infty$ bounds on their macroscopic velocity, temperature, and inverse temperature, we obtain a stability estimate uniform in time. The key new ingredient is the control of an additional velocity-cubic term in the relative entropy identity. For well-prepared initial data, this yields strong $L^1$ convergence of the BGK solution and its local Maxwellians to the target Euler Maxwellian, together with convergence of the associated macroscopic quantities.
\end{abstract}

\subjclass[2020]{Primary 35Q20, 35Q31; Secondary 76N15, 82C40}
\keywords{BGK model, hydrodynamic limit, compressible Euler equations, Maxwellians, relative entropy}
\maketitle
\thispagestyle{empty}

\section{Introduction}

The Bhatnagar--Gross--Krook (BGK) model, introduced in~\cite{bhatnagar1954model},
is a kinetic relaxation equation in which the distribution function
$f^\varepsilon(t,x,v)$ relaxes, on the time scale $\varepsilon>0$, towards a local
Maxwellian determined by its conserved moments. In this paper we consider the
nonisothermal BGK equation on
$[0,T)\times\mathbb R^d_x\times\mathbb R^d_v$,
\begin{equation}\label{eq:BGK}
\partial_t f^\varepsilon + v\cdot\nabla_x f^\varepsilon
= -\frac1\varepsilon\bigl(f^\varepsilon-\mathcal M(f^\varepsilon)\bigr),
\end{equation}
where $\mathcal M(f^\varepsilon)$ is the local Maxwellian with the same mass,
momentum, and kinetic energy as $f^\varepsilon$:
\begin{equation}\label{eq:localM}
\mathcal M(f^\varepsilon)(t,x,v)
=\frac1{(2\pi\theta^\varepsilon(t,x))^{d/2}}\rho^\varepsilon(t,x)
\exp\!\left(-\frac{|v-u^\varepsilon(t,x)|^2}{2\theta^\varepsilon(t,x)}\right).
\end{equation}
The corresponding density, mean velocity, and temperature are given by
\begin{equation}\label{rho_e}
\rho^\varepsilon(t,x)=\int_{\R^d} f^\varepsilon(t,x,v)\,\dv,
\end{equation}
\begin{equation}\label{u_e}
u^\varepsilon(t,x)=
\begin{dcases}
\dfrac1{\rho^\varepsilon(t,x)}\int_{\R^d} v\,f^\varepsilon(t,x,v)\,\dv,
& \text{if } \rho^\varepsilon(t,x)>0,\\
0,& \text{if } \rho^\varepsilon(t,x)=0,
\end{dcases}
\end{equation}
and
\begin{equation}\label{theta_e}
\theta^\varepsilon(t,x)=
\begin{dcases}
\dfrac1{d\,\rho^\varepsilon(t,x)}
\int_{\R^d}|v-u^\varepsilon(t,x)|^2f^\varepsilon(t,x,v)\,\dv,
& \text{if } \rho^\varepsilon(t,x)>0,\\
1,& \text{if } \rho^\varepsilon(t,x)=0.
\end{dcases}
\end{equation}
By construction, $\mathcal M(f^\varepsilon)$ shares with $f^\varepsilon$ the collision
invariants $1$, $v$, and $|v|^2$, so the relaxation term preserves mass, momentum,
and kinetic energy.

Formally, as $\varepsilon\to0$, one expects $f^\varepsilon$ to approach local
Maxwellians and the corresponding moments to satisfy the compressible Euler system with pressure law $p=\rho\theta$,
\begin{equation}\label{eq:Euler}
\partial_t\rho+\nabla_x\cdot(\rho u)=0,
\qquad
\partial_t(\rho u)+\nabla_x\cdot(\rho u\otimes u)+\nabla_x(\rho\theta)=0,
\end{equation}
\begin{equation}\label{eq:EulerE}
\partial_tE+\nabla_x\cdot\bigl(u(E+\rho\theta)\bigr)=0,
\qquad
E=\frac12\rho|u|^2+\frac d2\rho\theta,
\end{equation}
at least before shock formation, that is, on time intervals where $(\rho,u,\theta)$
remains smooth.

The purpose of this paper is to prove a conditional relative entropy stability result for this hydrodynamic limit in the smooth regime. Given a sufficiently smooth Euler solution
$(\rho,u,\theta)$ of~\eqref{eq:Euler}--\eqref{eq:EulerE} on $[0,T)\times\R^d$ with
strictly positive density and temperature, we consider the associated Maxwellian
\begin{equation}\label{eq:targetM}
M(t,x,v)=\frac1{(2\pi\theta(t,x))^{d/2}}\rho(t,x)
\exp\!\left(-\frac{|v-u(t,x)|^2}{2\theta(t,x)}\right).
\end{equation}
For nonnegative phase-space densities $f,g$ of unit mass and with $g>0$, we use the
relative entropy
\begin{equation}\label{def:relent}
H(f\mid g):=\iint_{\R^{2d}} f\log\frac{f}{g}\,\dv\,\dx,
\end{equation}
with the convention that the integrand is zero on the set where $f=0$.

A central point is the computation of $(\partial_t+v\cdot\nabla_x)\log M$. The resulting relative entropy identity contains a traceless stress term and an additional velocity-cubic contribution.
Our argument shows that, for BGK solutions satisfying a uniform sixth velocity-moment bound and uniform $L^\infty$ bounds on their macroscopic velocity, temperature, and inverse temperature, the velocity-cubic term can still be controlled in the smooth regime.
We derive a Gronwall-type estimate for $H(f^\varepsilon\mid M)$ and deduce strong convergence, uniform in time, of $f^\varepsilon$ towards $M$ in $L^1(\R^{2d})$, together with strong convergence of the local Maxwellians $\M(f^\varepsilon)$ towards $M$.
We also obtain the convergence of the associated macroscopic quantities.
Thus the convergence result should be understood as conditional on these a priori bounds. The precise assumptions and the statement of the main theorem are given in Section~\ref{sec:assump}.

The use of relative entropy in stability theory goes back to
Dafermos~\cite{dafermos1979second} and was developed systematically for
fluid-dynamic limits and hyperbolic relaxation by Tzavaras; see, for instance,
\cite{tzavaras2000fluiddynamic,tzavaras2005relative}.
In the kinetic setting, Berthelin and Vasseur~\cite{berthelin2005from} established,
by a relative entropy argument, the convergence from BGK models to multidimensional
Euler equations before shock formation for pressure laws of the form
$p(\rho)=\rho^\gamma$ with $\gamma>1$. The present work follows the same general strategy in the nonisothermal setting,
where the pressure is given by $p(\rho,\theta)=\rho\theta$ and the temperature evolves as an
additional unknown.

Related works by Berthelin and Bouchut
\cite{berthelin2000finite,berthelin2002relaxation,berthelin2002kinetic}
study BGK systems relaxing to gas-dynamic models by means of finite-energy,
kinetic-entropy, and invariant-domain methods.
A discrete-velocity counterpart was investigated by Berthelin, Tzavaras, and
Vasseur~\cite{berthelin2009from}.
More recently, Choi and Hwang~\cite{choi2024weak}, Hwang~\cite{hwang2025classical},
and Koo and Song~\cite{koo2025mild} established weak, classical, and mild solution
theories for related BGK-type models.
See also Choi and Hwang~\cite{choi2024alignment} for a BGK-type hydrodynamic limit
with alignment interactions.
For existence and stability results for the BGK equation itself, we also refer to
Perthame~\cite{perthame1989global}.

The a priori bounds used below are stronger than those supplied directly by the general existence theories cited above. They should be viewed as assumptions defining a class of BGK solutions for which the relative entropy stability estimate applies. Although the works cited above provide useful a priori estimates for related BGK models, these estimates do not by themselves yield the full set of uniform bounds used here. In particular, establishing uniform bounds on $u^\varepsilon$, $\theta^\varepsilon$, and $1/\theta^\varepsilon$ in this nonisothermal setting remains a separate issue, beyond the scope of this paper.

The present work adapts the relative entropy method to the nonisothermal BGK model and identifies the additional heat-flux term that appears in the associated relative entropy identity. To the best of our knowledge, the rigorous BGK hydrodynamic-limit results closest to ours concern regimes in which the pressure depends only on the density, whereas the full nonisothermal BGK limit to Euler Maxwellians has not been treated in this form.

The paper is organized as follows. In Section~\ref{sec:assump} we collect the assumptions and state the main theorem. Section~\ref{sec:convMax} discusses convergence to Maxwellians. In Section~\ref{sec:REI} we
derive the relative entropy identity. Section~\ref{sec:aux} gathers a few auxiliary lemmas used in the proof of the main estimate. Finally, in Sections~\ref{sec:proofThm} and~\ref{sec:proofCor} we prove the theorem and its corollary, respectively.

\section{Assumptions and main theorem} \label{sec:assump}

We now collect the assumptions used in the proof and state the main result. The result is conditional on the existence of a smooth Euler background and a smooth BGK family satisfying the assumptions stated in this section. Throughout the paper, $(\rho,u,\theta)$ denotes a smooth solution of \eqref{eq:Euler}--\eqref{eq:EulerE} on $[0,T)\times\R^d$, and $M$ denotes the associated Maxwellian \eqref{eq:targetM}. For brevity, we write
\[
\|\cdot\|_\infty:=\|\cdot\|_{L^\infty((0,T)\times\R^d)}.
\]

The Euler solution is understood to emanate from initial data
\[
(\rho,u,\theta)|_{t=0}=(\rho_0,u_0,\theta_0).
\]
We assume that these data generate a classical solution on $[0,T)\times\R^d$. The time interval $[0,T)$ is taken inside the lifespan of this smooth solution, that is, before possible loss of regularity or shock formation. Standard local-in-time smooth existence for compressible Euler in nondegenerate regimes follows from the classical theory of quasilinear symmetric hyperbolic systems; see, for instance, \cite{kato1975cauchy,majda1984compressible}. In the present paper we only use the existence of a smooth solution satisfying the bounds in~(B) below.

For each $0<\varepsilon\le1$, let $f^\varepsilon_0\ge0$ be an initial datum of unit mass, and let $f^\varepsilon$ be a smooth nonnegative solution of the BGK equation \eqref{eq:BGK} with
\[
f^\varepsilon(0,x,v)=f^\varepsilon_0(x,v).
\]
The associated macroscopic fields $\rho^\varepsilon$, $u^\varepsilon$, and $\theta^\varepsilon$ are those defined in \eqref{rho_e}--\eqref{theta_e}.
We do not prove existence of such a smooth BGK family in this paper; the main theorem is an a priori relative entropy estimate for any smooth family satisfying the stated assumptions.

The argument relies on the following two groups of hypotheses.

\begin{enumerate}[(A)]
\item \emph{BGK family and uniform bounds.}
The BGK family described above is assumed to have unit total mass,
\[
\iint_{\R^{2d}} f^\varepsilon(t,x,v)\,\dv\,\dx=1
\qquad\text{for all }t\in[0,T),
\]
and to satisfy the following estimates uniformly in $\varepsilon$. Namely, there exist constants
$U>0$, $0<\underline\Theta\le\overline\Theta<\infty$, and $A>0$ such that
\[
\sup_{0<\varepsilon\le1}\|u^\varepsilon\|_\infty\le U,
\]
\[
\sup_{0<\varepsilon\le1}\|\theta^\varepsilon\|_\infty\le \overline\Theta,
\qquad
\sup_{0<\varepsilon\le1}\left\|\frac1{\theta^\varepsilon}\right\|_\infty
\le \frac1{\underline\Theta},
\]
or equivalently,
\[
0<\underline\Theta\le \theta^\varepsilon(t,x)\le \overline\Theta
\quad\text{for all }(t,x)\in[0,T)\times\R^d,
\]
and
\[
\sup_{0<\varepsilon\le1}\sup_{t\in[0,T)}
\iint_{\R^{2d}} |v|^6 f^\varepsilon(t,x,v)\,\dv\,\dx\le A.
\]

\item \emph{Smooth Euler background.}
The triple $(\rho,u,\theta)$ is the smooth solution of \eqref{eq:Euler}--\eqref{eq:EulerE} described above. The Euler solution $(\rho,u,\theta)$ has unit mass,
\[
\int_{\R^d}\rho(t,x)\,\dx=1
\qquad\text{for all }t\in[0,T),
\]
strictly positive density, and temperature bounded away from zero and infinity:
\[
\rho(t,x)>0,
\qquad
0<\underline\theta\le \theta(t,x)\le \overline\theta
\qquad\text{for all }(t,x)\in[0,T)\times\R^d.
\]
Moreover,
\[
u,\ \nabla_xu,\ \nabla_x\log\theta\in L^\infty((0,T)\times\R^d).
\]

\end{enumerate}
\medskip
\par

We also assume sufficient regularity, decay, and integrability so that all relative entropies and entropy productions used in the sequel are finite, and so that all integrations by parts and differentiations under the integral sign are justified.

\begin{theorem}\label{thm_BGK-E}
Let $(\rho,u,\theta)$ be a smooth solution of \eqref{eq:Euler}--\eqref{eq:EulerE} satisfying \emph{(B)}, and let $M$ be the associated Maxwellian \eqref{eq:targetM}. Let $\{f^\varepsilon\}_{0<\varepsilon\le1}$ be a smooth nonnegative BGK family with initial data $f^\varepsilon_0$ as described above, and let $\rho^\varepsilon$, $u^\varepsilon$, and $\theta^\varepsilon$ be defined by \eqref{rho_e}--\eqref{theta_e}. Assume that \emph{(A)} holds and that the regularity, decay, and finite-entropy assumptions stated above are satisfied.

Then there exists a positive constant
\[
C=C\bigl(d,\underline\theta,\overline\theta,U,\underline\Theta,\overline\Theta,
A,\|u\|_\infty,\|\nabla_xu\|_\infty,\|\nabla_x\log\theta\|_\infty\bigr),
\]
independent of $\varepsilon$, such that
\begin{equation}\label{thm_RE_inequality}
\frac{\rd}{\dt} H(f^\varepsilon\mid M)\le C\bigl(H(f^\varepsilon\mid M)+\varepsilon\bigr)
\qquad\text{for all } t\in[0,T).
\end{equation}
Consequently, if at $t=0$,
\begin{equation}\label{thm_conv_hyp}
H(f^\varepsilon_0\mid M_0)\to0,
\qquad\text{as }\varepsilon\to0,
\end{equation}
where $M_0:=M(0,\cdot,\cdot)$, then
\begin{equation}\label{thm_conv_result}
\sup_{t \in [0,T)}H\big(f^\varepsilon(t)\mid M(t)\big)\to0,
\qquad\text{as }\varepsilon\to0.
\end{equation}
\end{theorem}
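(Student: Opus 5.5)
The plan is to differentiate $H(f^\varepsilon\mid M)$ in time and split the result into a dissipative BGK part and a transport part. Using the BGK equation \eqref{eq:BGK}, conservation of mass, and the decay assumptions, we get
\[
\frac{\rd}{\dt}H(f^\varepsilon\mid M)=-\frac1\varepsilon\iint (f^\varepsilon-\M(f^\varepsilon))\log\frac{f^\varepsilon}{M}\,\dv\,\dx-\iint f^\varepsilon(\partial_t+v\cdot\nabla_x)\log M\,\dv\,\dx .
\]
Since $\log\M(f^\varepsilon)$ and $\log M$ are both combinations of the collision invariants $1,v,|v|^2$, and $f^\varepsilon-\M(f^\varepsilon)$ annihilates these, we may replace $\log(f^\varepsilon/M)$ by $\log(f^\varepsilon/\M(f^\varepsilon))$ in the first term. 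It then becomes the nonpositive entropy production
\[
-\frac1\varepsilon D_\varepsilon,\qquad D_\varepsilon:=\iint (f^\varepsilon-\M(f^\varepsilon))\log\frac{f^\varepsilon}{\M(f^\varepsilon)}\,\dv\,\dx\ge0 .
\]

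Next, compute $(\partial_t+v\cdot\nabla_x)\log M$ using the Euler equations \eqref{eq:Euler}--\eqref{eq:EulerE} in nonconservative form. Write $w=v-u$. The Euler equations cancel all terms that are constant or linear in $w$, leaving
\[
-\frac{1}{\theta}\Bigl(w\otimes w-\frac{|w|^2}{d}I\Bigr):\nabla_x u+\Bigl(\frac{|w|^2}{2\theta}-\frac{d+2}{2}\Bigr)w\cdot\nabla_x\log\theta .
\]
These are the traceless stress term and the velocity-cubic heat-flux term. Both are orthogonal to the collision invariants against $M$. More to the point, integrated against $\M(f^\varepsilon)$ they produce expressions that depend only on the discrepancies $(\rho^\varepsilon-\rho,\,u^\varepsilon-u,\,\theta^\varepsilon-\theta)$, and they vanish when these are zero. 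I would therefore split $f^\varepsilon=\M(f^\varepsilon)+(f^\varepsilon-\M(f^\varepsilon))$ and treat the two pieces separately.

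For the Maxwellian piece, the Gaussian moments of $\M(f^\varepsilon)$ are explicit. The stress term gives $\rho^\varepsilon\bigl((u^\varepsilon-u)\otimes(u^\varepsilon-u)-\tfrac{|u^\varepsilon-u|^2}{d}I\bigr):\nabla u/\theta$. The cubic term gives a polynomial in $u^\varepsilon-u$, with coefficients involving $\theta^\varepsilon/\theta-1$, that is at least quadratic in the discrepancies. We then bound this by $C\,H(\M(f^\varepsilon)\mid M)$, using the explicit formula for the relative entropy of two Maxwellians. That entropy equals $\int\rho^\varepsilon\log(\rho^\varepsilon/\rho)+\rho^\varepsilon\bigl[\tfrac{|u^\varepsilon-u|^2}{2\theta}+\tfrac d2(\tfrac{\theta^\varepsilon}{\theta}-1-\log\tfrac{\theta^\varepsilon}{\theta})\bigr]$, and on the range fixed by (A) and (B) the last bracket is comparable to $|\theta^\varepsilon-\theta|^2$. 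Cubic powers of $u^\varepsilon-u$ are reduced to quadratic ones via $U$ and $\|u\|_\infty$. Since $\M(f^\varepsilon)$ minimizes entropy at fixed moments, $H(\M(f^\varepsilon)\mid M)\le H(f^\varepsilon\mid M)$. This collection of bounds would be the auxiliary lemmas.

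For the nonequilibrium piece, let $\phi(v)$ be the polynomial weight of degree at most $3$, with $|\phi|\le C(1+|v|^3)$. Apply the Young/entropy inequality $ab\le e^{a}-1-a+b\log b-b+1$ pointwise in the form
\[
\Bigl|\int (f^\varepsilon-\M)\phi\Bigr|\le\int\Bigl(\frac{\varepsilon}{\delta}\bigr)\ldots
\]
Concretely, I would use Cauchy--Schwarz:
\[
\Bigl|\iint (f-\M)\phi\Bigr|\le\Bigl(\iint\frac{(f-\M)^2}{f+\M}\Bigr)^{1/2}\Bigl(\iint(f+\M)\phi^2\Bigr)^{1/2}.
\]
The first factor is at most $C D_\varepsilon^{1/2}$, by the elementary inequality $(a-b)^2/(a+b)\le C(a-b)\log(a/b)$. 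The second factor is bounded by the sixth moment bound $A$, together with $U$ and $\overline\Theta$ for the Maxwellian moments. Then Young's inequality gives $C\sqrt{D_\varepsilon}\le \tfrac1\varepsilon D_\varepsilon+C\varepsilon$, which absorbs into the dissipation and leaves $C\varepsilon$. This yields \eqref{thm_RE_inequality}. Gronwall then gives $H(f^\varepsilon(t)\mid M(t))\le e^{CT}\bigl(H(f_0^\varepsilon\mid M_0)+C\varepsilon T\bigr)$, which is \eqref{thm_conv_result}.

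The main obstacle is the Maxwellian part of the cubic heat-flux term. One must check that, after the Euler cancellations, its $\M(f^\varepsilon)$-average really is quadratic in the discrepancies, with no surviving linear terms such as $(u^\varepsilon-u)(\theta^\varepsilon-\theta)$ with an unfavourable sign. If a linear remainder appears, it must cancel against the corresponding linear term from the stress part and the $\nabla\log\theta$ contribution. I would first verify this cancellation by direct Gaussian moment computation.
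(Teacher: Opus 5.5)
Your proposal is correct and follows essentially the same route as the paper. The steps match: the relative entropy identity with the collision-invariant replacement; the Maxwellian parts bounded by $H(\M(f^\varepsilon)\mid M)\le H(f^\varepsilon\mid M)$ via the Maxwellian--Maxwellian formula and $(r-1)^2\le C(r-1-\log r)$; the nonequilibrium parts handled by Cauchy--Schwarz with weight $f^\varepsilon+\M(f^\varepsilon)$, the inequality $(a-b)^2/(a+b)\le\frac12(a-b)\log(a/b)$, the sixth-moment bound and Young's inequality; then Gronwall. There are two harmless slips. First, the stress term in $(\partial_t+v\cdot\nabla_x)\log M$ should carry a plus sign, $+\frac1\theta\bigl(w\otimes w-\frac{|w|^2}{d}I_d\bigr):\nabla_xu$, which does not matter since you only estimate it in absolute value. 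Second, the obstacle you flag does not arise: the heat-flux moment against $\M(f^\varepsilon)$ is exactly $\rho^\varepsilon\,\delta u\bigl((d+2)(\theta^\varepsilon/\theta-1)+|\delta u|^2/\theta\bigr)$ with $\delta u=u^\varepsilon-u$. It contains only a bilinear term and a cubic term, and both are absorbed exactly as you describe.
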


Theorem~\ref{thm_BGK-E} provides the relative entropy stability estimate and, under well-prepared initial data, yields uniform-in-time convergence in relative entropy. The next corollary records the corresponding kinetic and macroscopic consequences.

\begin{corollary}\label{cor:strongconv}
Under the hypotheses of Theorem~\ref{thm_BGK-E}, together with \eqref{thm_conv_hyp}, one has
\begin{equation} \label{eq:cor_1}
f^\varepsilon\to M,
\qquad
\M(f^\varepsilon)\to M
\qquad
\text{in }L^\infty(0,T;L^1(\R^{2d})),
\end{equation}
as well as
\begin{equation} \label{eq:cor_2}
\rho^\varepsilon\to\rho,
\qquad
\rho^\varepsilon u^\varepsilon\to\rho u,
\qquad
\rho^\varepsilon\theta^\varepsilon\to\rho\theta
\qquad
\text{in }L^\infty(0,T;L^1(\R^d)).
\end{equation}
\end{corollary}

\section{Convergence to Maxwellians} \label{sec:convMax}
In this section we collect a few relative entropy identities involving Maxwellians that will be used repeatedly in the proof of the main theorem. In particular, they provide the link between convergence in relative entropy, strong convergence in phase space, and convergence of the associated macroscopic quantities.

We consider the space
\[
\F=\Bigl\{f\in L^1(\R^{2d}):\; f\ge0,\ \|f\|_{L^1(\R^{2d})}=1,\ \iint_{\R^{2d}}|v|^2f(x,v)\,\dx\,\dv<\infty\Bigr\},
\]
and for each $f\in\F$ we define its density, mean velocity and temperature respectively by
\[
\rho_f(x):=\int_{\R^d} f(x,v)\,\dv,
\]
\[
u_f(x):=
\begin{dcases}
\dfrac{1}{\rho_f(x)}\int_{\R^d} v f(x,v)\,\dv,& \text{if }\rho_f(x)>0,\\
0,& \text{if }\rho_f(x)=0,
\end{dcases}
\]
and
\[
\theta_f(x):=
\begin{dcases}
\dfrac1{d\,\rho_f(x)}\int_{\R^d}|v-u_f(x)|^2 f(x,v)\,\dv,& \text{if }\rho_f(x)>0,\\
1,& \text{if }\rho_f(x)=0.
\end{dcases}
\]
The associated local Maxwellian is
\[
\M(f)(x,v)=\frac{\rho_f(x)}{(2\pi\theta_f(x))^{d/2}}
\exp\!\left(-\frac{|v-u_f(x)|^2}{2\theta_f(x)}\right).
\]
We say that $f$ is a Maxwellian if $f=\M(f)$.

For $f,g\in\F$, the Csisz\'{a}r--Kullback--Pinsker inequality (see~\cite{gilardoni2010pinsker}) states that
\begin{equation}\label{CKP_inequality}
\|f-g\|_{L^1(\R^{2d})}^2\le 2\,H(f\mid g).
\end{equation}

The first lemma gives the basic entropy splitting associated with the Maxwellian projection.
\begin{lemma}\label{lem_pyt}
For $f,g\in\F$, whenever the entropies are finite,
\begin{equation}\label{pyt}
H(g\mid \M(f))=H(g\mid \M(g))+H(\M(g)\mid \M(f)).
\end{equation}
\end{lemma}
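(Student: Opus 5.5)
The plan is to write the difference of the two sides as an integral of $g$ against a function of $v$ that is a combination of collision invariants, and then use that $g$ and $\M(g)$ share the moments $1$, $v$, $|v|^2$. Expanding the definition \eqref{def:relent},
\[
H(g\mid \M(f)) - H(g\mid \M(g)) = \iint_{\R^{2d}} g\,\log\frac{\M(g)}{\M(f)}\,\dv\,\dx ,
\]
while
\[
H(\M(g)\mid \M(f)) = \iint_{\R^{2d}} \M(g)\,\log\frac{\M(g)}{\M(f)}\,\dv\,\dx .
\]
So \eqref{pyt} is equivalent to
\[
\iint_{\R^{2d}} \bigl(g-\M(g)\bigr)\log\frac{\M(g)}{\M(f)}\,\dv\,\dx=0 .
\]

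The key step is the observation that, for fixed $x$ with $\rho_g(x)>0$ and $\rho_f(x)>0$, the function $\log\M(g)-\log\M(f)$ is a quadratic polynomial in $v$ of the form
\[
a(x)+b(x)\cdot v+c(x)|v|^2 .
\]
Here
\[
a=\log\frac{\rho_g}{\rho_f}-\frac d2\log\frac{\theta_g}{\theta_f}-\frac{|u_g|^2}{2\theta_g}+\frac{|u_f|^2}{2\theta_f},
\qquad
b=\frac{u_g}{\theta_g}-\frac{u_f}{\theta_f},
\qquad
c=\frac1{2\theta_f}-\frac1{2\theta_g}.
\]
By construction $\M(g)$ has the same density, momentum and kinetic energy as $g$ at each $x$: the Gaussian with parameters $(\rho_g,u_g,\theta_g)$ has mass $\rho_g$, momentum $\rho_g u_g$, and second moment $\rho_g|u_g|^2+d\rho_g\theta_g=\int|v|^2g\,\dv$. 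Hence the $v$-integral of $(g-\M(g))(a+b\cdot v+c|v|^2)$ vanishes pointwise in $x$, and integrating in $x$ gives the claim.

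The remaining work is bookkeeping of degenerate sets and integrability.

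\emph{Vacuum of $g$.} Where $\rho_g(x)=0$, both $g(x,\cdot)$ and $\M(g)(x,\cdot)$ vanish a.e.\ in $v$ (since $g\ge0$), so these $x$ contribute nothing to any of the three entropies.

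\emph{Vacuum of $f$.} Where $\rho_f(x)=0$ but $\rho_g(x)>0$, the entropy $H(g\mid\M(f))$ would be infinite. Finiteness of the left side therefore forces $\rho_g=0$ a.e.\ on $\{\rho_f=0\}$, and the identity reduces to the set where both densities are positive.

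\emph{Splitting the integrals.} The main technical point is justifying the separation of $H(g\mid\M(f))=\iint g\log g-\iint g\log\M(g)+\iint g\log(\M(g)/\M(f))$ into individually finite pieces. The second and third integrands grow at most quadratically in $v$, with coefficients depending on $x$. I would argue that finiteness of the three entropies together with $g\in\F$ (finite second moment) makes each piece integrable. If needed, I would first prove the identity under the standing regularity/integrability assumptions of the paper and note that this is all that is used later.

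I expect the only real obstacle to be this last integrability justification, in particular controlling the $x$-dependent coefficients $a,b,c$, since $\log\rho_g$ and $1/\theta_g$ may be unbounded in $x$. The algebraic core, orthogonality of $g-\M(g)$ to collision invariants, is immediate.
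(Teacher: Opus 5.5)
Your argument is correct and is essentially the paper's proof: you split $\log(g/\M(f))=\log(g/\M(g))+\log(\M(g)/\M(f))$, observe that the second term is $a+b\cdot v+c|v|^2$, and use that $g$ and $\M(g)$ share the moments $1$, $v$, $|v|^2$ (the paper simply omits the vacuum and integrability bookkeeping you add). On that bookkeeping, do not try to make $\iint g\log g$ and $\iint g\log\M(g)$ separately finite, since both can be $-\infty$ (e.g.\ for $g=f=\M(f)$ with $\int\rho_f\log\rho_f=-\infty$); instead apply the moment identity fiberwise in $x$, where $g(x,\cdot)$ has finite second moment, and then integrate in $x$ using $\int g\log(g/\M(g))\,\dv\ge0$ and $\int\M(g)\log(\M(g)/\M(f))\,\dv+\rho_f-\rho_g\ge0$, so that no bounds on $a,b,c$ in $x$ are needed.
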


\begin{proof}
We write
\[
H(g\mid \M(f))=\iint_{\R^{2d}} g\log\frac{g}{\M(g)}\,\dv\,\dx
+\iint_{\R^{2d}} g\log\frac{\M(g)}{\M(f)}\,\dv\,\dx.
\]
Since $\M(g)$ and $\M(f)$ are Maxwellians, a direct computation shows that
\[
\log\frac{\M(g)}{\M(f)}=a(x)+b(x)\cdot v+c(x)|v|^2
\]
for suitable scalar functions $a,c$ and vector field $b$.
Because $g$ and $\M(g)$ have the same moments against the collision invariants $1$, $v$, and $|v|^2$, we obtain
\begin{align*}
\iint_{\R^{2d}} g\log\frac{\M(g)}{\M(f)}\,\dv\,\dx
& =\iint_{\R^{2d}} \M(g)\log\frac{\M(g)}{\M(f)}\,\dv\,\dx \\
& =H(\M(g)\mid \M(f)).
\end{align*}
This proves \eqref{pyt}.
\end{proof}

We next record the explicit formula for the relative entropy between two Maxwellians. This identity will later allow us to extract quantitative information on the density, velocity, and temperature errors.

\begin{lemma}\label{lem_maxmax}
Let $M_1=M_{\rho_1,u_1,\theta_1}$ and $M_2=M_{\rho_2,u_2,\theta_2}$ be Maxwellians of the form~\eqref{eq:targetM}, with
\[
\rho_1\ge 0,\qquad \rho_2>0,\qquad \theta_1>0,\qquad \theta_2>0.
\]
Then
\begin{equation}\label{eq:maxmax}
\begin{split}
H(M_1\mid M_2)
=&\int_{\R^d} \rho_1\log\frac{\rho_1}{\rho_2}\,\dx
+\frac12\int_{\R^d}\rho_1\frac{|u_1-u_2|^2}{\theta_2}\,\dx \\
&+\frac d2\int_{\R^d}\rho_1\left(\frac{\theta_1}{\theta_2}-1-\log\frac{\theta_1}{\theta_2}\right)\,\dx.
\end{split}
\end{equation}
\end{lemma}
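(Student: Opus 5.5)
The plan is a direct computation of $\log(M_1/M_2)$ followed by Gaussian moment integration in $v$ for each fixed $x$. On the set where $\rho_1(x)=0$, $M_1$ vanishes, so by convention the integrand is zero there; every term on the right-hand side of \eqref{eq:maxmax} also vanishes there, with $\rho_1\log(\rho_1/\rho_2)$ read as $0$. It therefore suffices to work pointwise in $x$ where $\rho_1>0$ and then integrate in $x$.

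Where $\rho_1>0$ we write
\[
\log\frac{M_1}{M_2}=\log\frac{\rho_1}{\rho_2}-\frac d2\log\frac{\theta_1}{\theta_2}-\frac{|v-u_1|^2}{2\theta_1}+\frac{|v-u_2|^2}{2\theta_2}.
\]
The expression is a quadratic polynomial in $v$, so $\int M_1\log(M_1/M_2)\,\dv$ only requires the zeroth, first and second moments of $M_1$:
\[
\int M_1\,\dv=\rho_1,\qquad \int M_1 v\,\dv=\rho_1u_1,\qquad \int M_1|v-u_1|^2\,\dv=d\rho_1\theta_1 .
\]
The first two terms simply get multiplied by $\rho_1$. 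The third term integrates to $-\tfrac d2\rho_1$. For the fourth term, expand
\[
|v-u_2|^2=|v-u_1|^2+2(v-u_1)\cdot(u_1-u_2)+|u_1-u_2|^2 .
\]
The cross term integrates to zero, since $\int M_1 (v-u_1)\,\dv=0$. The fourth term thus integrates to
\[
\frac{\rho_1}{2\theta_2}\bigl(d\theta_1+|u_1-u_2|^2\bigr).
\]

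Collecting terms gives
\[
\rho_1\log\frac{\rho_1}{\rho_2}+\frac{\rho_1|u_1-u_2|^2}{2\theta_2}+\frac d2\rho_1\Bigl(\frac{\theta_1}{\theta_2}-1-\log\frac{\theta_1}{\theta_2}\Bigr),
\]
and integrating in $x$ yields \eqref{eq:maxmax}.

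The only point needing care is justifying the splitting of the integral into these pieces, which requires the three pieces to be separately integrable. I would handle this with the standing finiteness assumptions. Alternatively, one can observe that the velocity and temperature integrands are nonnegative, because $s-1-\log s\ge0$, so Tonelli applies to them. The density term is then finite whenever $H(M_1\mid M_2)$ is.

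I expect no real obstacle here; the computation is routine.
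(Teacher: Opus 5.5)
Your proposal is correct and follows essentially the same route as the paper: restrict to $\{\rho_1>0\}$, write $\log(M_1/M_2)$ as a quadratic polynomial in $v$, and integrate it against $M_1$ using the zeroth, first and second moments (your expansion of $|v-u_2|^2$ is exactly how one obtains the paper's identity $\int|v-u_2|^2M_1\,\dv=d\rho_1\theta_1+\rho_1|u_1-u_2|^2$). Your extra remark on splitting the integral via Tonelli for the nonnegative velocity and temperature pieces is slightly more careful than the paper, which relies only on the standing finiteness assumptions.
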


\begin{proof}
We first note that on $\{\rho_1 = 0\}$ we have $M_1(x,v)=0$ for every $v\in\R^d$, and therefore both sides of~\eqref{eq:maxmax} vanish there. It is thus enough to work on $\{\rho_1 > 0\}$.

A direct computation gives
\[
\log\frac{M_1}{M_2}
=\log\frac{\rho_1}{\rho_2}-\frac d2\log\frac{\theta_1}{\theta_2}
-\frac{|v-u_1|^2}{2\theta_1}+\frac{|v-u_2|^2}{2\theta_2}.
\]
Integrating against $M_1$ and using
\[
\int_{\R^d} |v-u_1|^2 M_1\,\dv=d\rho_1\theta_1,
\qquad
\int_{\R^d} |v-u_2|^2 M_1\,\dv=d\rho_1\theta_1+\rho_1|u_1-u_2|^2,
\]
we obtain \eqref{eq:maxmax}.
\end{proof}

As a first consequence of the entropy splitting, convergence in relative entropy towards a Maxwellian implies strong convergence of the associated Maxwellians.

\begin{proposition}\label{prop_conv_1}
Let $\{f_n\}_{n \in \N}\subseteq\F$ be a sequence and let $f\in\F$ satisfy $f=\M(f)$.
If
\[
H(f_n\mid f)\to0\qquad\text{as }n\to\infty,
\]
then
\[
\M(f_n)\to f\qquad\text{in }L^1(\R^{2d})\qquad\text{as }n\to\infty.
\]
\end{proposition}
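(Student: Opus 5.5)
Apply Lemma~\ref{lem_pyt} with $g=f_n$ and with $f$ in the role of the reference distribution. Since $f=\M(f)$, this gives
\[
H(f_n\mid f)=H(f_n\mid \M(f))=H(f_n\mid \M(f_n))+H(\M(f_n)\mid f).
\]
The hypotheses make all entropies involved finite.

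Next we check that both terms on the right are nonnegative. The first is a relative entropy between two nonnegative functions of unit mass. Indeed, $\M(f_n)$ has the same mass as $f_n$, because $\int \M(f_n)\,\dv=\rho_{f_n}$ on $\{\rho_{f_n}>0\}$ and both vanish elsewhere. Hence the first term is nonnegative by Jensen's inequality, or equivalently by \eqref{CKP_inequality}. For the second term, $\M(f_n)$ and $f$ both lie in $\F$, and again the relative entropy is nonnegative. The second moment of $\M(f_n)$ equals that of $f_n$, which is finite. One can also see nonnegativity directly from the explicit formula \eqref{eq:maxmax}. There the velocity and temperature terms are pointwise nonnegative, since $s-1-\log s\ge0$. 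The density term $\int \rho_{f_n}\log(\rho_{f_n}/\rho_f)\,\dx$ is nonnegative because both densities have unit mass.

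From this splitting and nonnegativity we get
\[
0\le H(\M(f_n)\mid f)\le H(f_n\mid f)\to0.
\]
Applying \eqref{CKP_inequality} to the pair $\M(f_n), f\in\F$ then yields
\[
\|\M(f_n)-f\|_{L^1(\R^{2d})}^2\le 2H(\M(f_n)\mid f)\to0,
\]
which is the claim.

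The only delicate point is justifying Lemma~\ref{lem_pyt} here, since it requires finiteness of all the relative entropies. For $H(\M(f_n)\mid f)$, use \eqref{eq:maxmax}. The velocity and temperature contributions are controlled by second moments of $f_n$ together with the positivity and regularity of $\rho_f$ and $\theta_f$. The density contribution is bounded above by $H(f_n\mid f)$, via the log-sum (Jensen) inequality in $v$. Finiteness of $H(f_n\mid\M(f_n))$ then follows from the splitting identity itself, read as a difference of finite quantities. Alternatively, I would simply invoke the standing convention that the entropies are finite.
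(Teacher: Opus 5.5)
Your proof is correct and follows the paper's own argument: apply Lemma~\ref{lem_pyt} with $g=f_n$ and $f=\M(f)$ to get $H(\M(f_n)\mid f)\le H(f_n\mid f)\to0$, then conclude with the Csisz\'ar--Kullback--Pinsker inequality \eqref{CKP_inequality}. Your additional checks of nonnegativity, of $\M(f_n)\in\F$, and of finiteness are careful extras; the paper covers finiteness simply through the standing ``whenever the entropies are finite'' convention in Lemma~\ref{lem_pyt}.
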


\begin{proof}
Since $f=\M(f)$, Lemma~\ref{lem_pyt} with $g=f_n$ and $f$ gives
\[
H(f_n\mid f)=H(f_n\mid \M(f_n))+H(\M(f_n)\mid f).
\]
In particular,
\[
H(\M(f_n)\mid f)\le H(f_n\mid f)\to0\qquad\text{as }n\to\infty.
\]
The conclusion follows from the Csisz\'{a}r--Kullback--Pinsker inequality \eqref{CKP_inequality}.
\end{proof}

For completeness, we also record a converse mechanism: if the associated Maxwellians converge in relative entropy and the phase-space densities become asymptotically Maxwellian, then the full distributions converge strongly in phase space.

\begin{proposition}\label{prop_conv_2}
Let $\{f_n\}_{n \in \N}\subseteq\F$ be a sequence and let $f\in\F$ satisfy $f=\M(f)$.
Assume that
\begin{enumerate}[(i)]
\item $H(\M(f_n)\mid \M(f))\to0$ as $n\to\infty$,
\item $H(f_n\mid \M(f_n))\to0$ as $n\to\infty$.
\end{enumerate}
Then
\[
f_n\to f\qquad\text{in }L^1(\R^{2d})\quad\text{as }n\to\infty.
\]
\end{proposition}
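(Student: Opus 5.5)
We prove Proposition~\ref{prop_conv_2} with the triangle inequality in $L^1(\R^{2d})$, inserting the intermediate Maxwellian $\M(f_n)$:
\[
\|f_n-f\|_{L^1(\R^{2d})}\le \|f_n-\M(f_n)\|_{L^1(\R^{2d})}+\|\M(f_n)-\M(f)\|_{L^1(\R^{2d})},
\]
where we use $f=\M(f)$ in the second term. Each term on the right is then controlled by one of the hypotheses via the Csisz\'{a}r--Kullback--Pinsker inequality~\eqref{CKP_inequality}.

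For the first term we apply~\eqref{CKP_inequality} with the pair $(f_n,\M(f_n))$, which gives $\|f_n-\M(f_n)\|_{L^1}^2\le 2H(f_n\mid\M(f_n))$. This tends to zero by (ii). For the second term we apply~\eqref{CKP_inequality} with the pair $(\M(f_n),\M(f))$, which gives $\|\M(f_n)-\M(f)\|_{L^1}^2\le 2H(\M(f_n)\mid\M(f))$. This tends to zero by (i). Adding the two bounds proves the claim.

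The only point requiring care is that~\eqref{CKP_inequality} is stated for elements of $\F$, so we must check that $\M(f_n)\in\F$. By construction, $\M(f_n)$ is nonnegative and has the same moments as $f_n$ against $1$ and $|v|^2$. Hence it has unit mass and a finite second moment, and it belongs to $\F$. The elements $f_n$ and $f=\M(f)$ lie in $\F$ by assumption. Thus there is no real obstacle here. Alternatively, Lemma~\ref{lem_pyt} with $g=f_n$ gives $H(f_n\mid f)=H(f_n\mid\M(f_n))+H(\M(f_n)\mid f)\to0$, and a single application of~\eqref{CKP_inequality} to the pair $(f_n,f)$ then yields the conclusion.
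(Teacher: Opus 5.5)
Your argument is correct. Your closing ``alternative'' is exactly the paper's proof: with $f=\M(f)$, Lemma~\ref{lem_pyt} gives $H(f_n\mid f)=H(f_n\mid \M(f_n))+H(\M(f_n)\mid \M(f))\to0$, and one application of \eqref{CKP_inequality} to the pair $(f_n,f)$ concludes.

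Your primary argument takes a slightly different route. It splits in $L^1$ by the triangle inequality and applies \eqref{CKP_inequality} twice, once to $(f_n,\M(f_n))$ and once to $(\M(f_n),\M(f))$.

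\begin{itemize}
\item \emph{What your version buys:} it never uses the entropy splitting. It needs neither the fact that $\log(\M(f_n)/\M(f))$ is a combination of collision invariants nor the finiteness of $H(f_n\mid f)$. All it requires is $\M(f_n)\in\F$, which you verify correctly from moment matching against $1$ and $|v|^2$.
\item \emph{What the paper's version buys:} a stronger conclusion, namely convergence in relative entropy, $H(f_n\mid f)\to0$. It also gives a marginally sharper quantitative bound, $\|f_n-f\|_{L^1}\le\bigl(2H(f_n\mid\M(f_n))+2H(\M(f_n)\mid\M(f))\bigr)^{1/2}$, whereas yours is the sum of the two separate square roots.
\end{itemize}
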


\begin{proof}
By Lemma~\ref{lem_pyt} and the fact that $f=\M(f)$,
\[
H(f_n\mid f)=H(f_n\mid \M(f_n))+H(\M(f_n)\mid \M(f)).
\]
The right-hand side converges to zero by assumptions (i)--(ii), hence $H(f_n\mid f)\to0$.
The Csisz\'{a}r--Kullback--Pinsker inequality \eqref{CKP_inequality} then implies
$f_n\to f$ in $L^1(\R^{2d})$.
\end{proof}

\section{Relative entropy identity} \label{sec:REI}

In this section we derive the relative entropy identity that lies at the core of the proof of Theorem~\ref{thm_BGK-E}. The main step is to compute $(\partial_t+v\cdot\nabla_x)\log M$, where $M$ is the target Maxwellian. Once this computation is available, the relative entropy identity follows by a standard differentiation argument.

\begin{lemma}\label{lem_DlogM}
Let $(\rho,u,\theta)$ be a smooth solution of~\eqref{eq:Euler}--\eqref{eq:EulerE}, let $M$ be given by~\eqref{eq:targetM}, and set $c:=v-u(t,x)$.
Then
\begin{equation}\label{eq:DlogM}
\begin{split}
(\partial_t+v\cdot\nabla_x)\log M
= \ & \frac1\theta\left(c\otimes c-\frac{|c|^2}{d}I_d\right):\nabla_xu \\
& +\frac12\,c\cdot\nabla_x\log\theta\left(\frac{|c|^2}{\theta}-(d+2)\right).
\end{split}
\end{equation}
\end{lemma}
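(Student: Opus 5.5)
We will compute directly from the explicit form
\[
\log M=\log\rho-\frac d2\log\theta-\frac{|c|^2}{2\theta}-\frac d2\log(2\pi),\qquad c=v-u(t,x),
\]
and write $D:=\partial_t+v\cdot\nabla_x=D_t+c\cdot\nabla_x$, where $D_t:=\partial_t+u\cdot\nabla_x$ is the material derivative. The first step is to rewrite the smooth Euler system \eqref{eq:Euler}--\eqref{eq:EulerE} in nonconservative form along $D_t$:
\begin{align*}
D_t\log\rho&=-\nabla_x\cdot u,\\
D_tu&=-\frac1\rho\nabla_x(\rho\theta)=-\theta\nabla_x\log\rho-\nabla_x\theta,\\
D_t\theta&=-\frac2d\,\theta\,\nabla_x\cdot u.
\end{align*}
The temperature equation is obtained from the energy equation by subtracting $u\cdot$(momentum equation) and using the continuity equation. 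This is routine but is the only place where \eqref{eq:EulerE} enters.

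Next we apply $D$ to each term of $\log M$. For the first two terms,
\[
D\log\rho=-\nabla_x\cdot u+c\cdot\nabla_x\log\rho,\qquad
D\log\theta=-\frac2d\nabla_x\cdot u+c\cdot\nabla_x\log\theta.
\]
For the quadratic term we use $Dc=-Du=-D_tu-(c\cdot\nabla_x)u$, which gives
\[
-D\frac{|c|^2}{2\theta}=\frac{c\cdot Du}{\theta}+\frac{|c|^2}{2\theta}\,D\log\theta .
\]
Here
\[
c\cdot Du=-\theta\,c\cdot\nabla_x\log\rho-c\cdot\nabla_x\theta+c\otimes c:\nabla_xu .
\]

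Finally we collect terms by their degree in $c$.
\begin{itemize}
\item The $c\cdot\nabla_x\log\rho$ terms cancel: the contribution $+c\cdot\nabla_x\log\rho$ from $D\log\rho$ cancels against $-c\cdot\nabla_x\log\rho$ from $c\cdot Du/\theta$.
\item The divergence terms combine to
\[
-\nabla_x\cdot u+\nabla_x\cdot u-\frac{|c|^2}{d\theta}\nabla_x\cdot u,
\]
which together with $\frac1\theta c\otimes c:\nabla_xu$ yields the traceless term $\frac1\theta\bigl(c\otimes c-\frac{|c|^2}{d}I_d\bigr):\nabla_xu$.
\item The $\nabla_x\log\theta$ terms are $-\frac d2 c\cdot\nabla_x\log\theta$, $-c\cdot\nabla_x\log\theta$ (from $-c\cdot\nabla_x\theta/\theta$), and $\frac{|c|^2}{2\theta}c\cdot\nabla_x\log\theta$. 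Their sum is $\frac12 c\cdot\nabla_x\log\theta\bigl(\frac{|c|^2}{\theta}-(d+2)\bigr)$.
\end{itemize}
This gives \eqref{eq:DlogM}.

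The main obstacle is bookkeeping rather than any conceptual difficulty. Two points need care: deriving the temperature equation with the correct factor $2/d$, and tracking the sign of $Dc=-Du$, including the term $(c\cdot\nabla_x)u$. That term is exactly what produces $c\otimes c:\nabla_xu$, and the cancellation of all $\nabla_x\log\rho$ contributions depends on it. We would check the result against the traceless structure, since the total $c$-independent and $|c|^2\nabla_x\cdot u$ coefficients must combine into a trace-free tensor.
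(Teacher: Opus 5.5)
Your proposal is correct and follows essentially the same route as the paper: the splitting $\partial_t+v\cdot\nabla_x=D_t+c\cdot\nabla_x$, the nonconservative Euler relations for $\log\rho$, $u$ and $\log\theta$, the expansion of $-D\bigl(|c|^2/(2\theta)\bigr)$ via $Dc=-Du$, and the cancellation of the $c\cdot\nabla_x\log\rho$ terms all match the paper's computation. One small correction to your closing heuristic: the $\nabla_x\log\rho$ cancellation comes from the pressure-gradient part $D_tu$ (the momentum equation), not from the $(c\cdot\nabla_x)u$ term, which only produces $c\otimes c:\nabla_xu$.
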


\begin{proof}
Set $D_u:=\partial_t+u\cdot\nabla_x$.
The Euler system implies
\[
D_u\log\rho=-\nabla_x\cdot u,
\qquad
D_u u=-\nabla_x\theta-\theta\nabla_x\log\rho,
\]
and
\[
D_u\log\theta=-\frac2d\nabla_x\cdot u.
\]
Since $\partial_t+v\cdot\nabla_x=D_u+c\cdot\nabla_x$ and
\[
\log M=\log\rho-\frac d2\log(2\pi\theta)-\frac{|c|^2}{2\theta},
\]
we compute
\begin{align*}
(\partial_t+v\cdot\nabla_x)\log\rho
&=-\nabla_x\cdot u+c\cdot\nabla_x\log\rho,\\
-\frac d2(\partial_t+v\cdot\nabla_x)\log\theta
&=\nabla_x\cdot u-\frac d2\,c\cdot\nabla_x\log\theta.
\end{align*}
Moreover,
\begin{align*}
(\partial_t+v\cdot\nabla_x)c &=-(\partial_t+v\cdot\nabla_x)u \\&=\nabla_x\theta+\theta\nabla_x\log\rho-(c\cdot\nabla_x)u,
\end{align*}
so that
\begin{align*}
-(\partial_t+v\cdot\nabla_x)\frac{|c|^2}{2\theta}
= \ & -\frac1\theta\,c\cdot\bigl(\nabla_x\theta+\theta\nabla_x\log\rho-(c\cdot\nabla_x)u\bigr) \\
& +\frac{|c|^2}{2\theta}(\partial_t+v\cdot\nabla_x)\log\theta \\
= \ & -c\cdot\nabla_x\log\theta-c\cdot\nabla_x\log\rho
+\frac1\theta c\otimes c: \nabla_xu \\
& -\frac{|c|^2}{d\theta}\nabla_x\cdot u
+\frac{|c|^2}{2\theta}c\cdot\nabla_x\log\theta.
\end{align*}
Adding the three contributions, the terms involving $c\cdot\nabla_x\log\rho$ cancel and we obtain
\begin{align*}
(\partial_t+v\cdot\nabla_x)\log M
= \ & \frac1\theta c\otimes c: \nabla_xu-\frac{|c|^2}{d\theta}\nabla_x\cdot u \\
& +\frac12 c\cdot\nabla_x\log\theta\left(\frac{|c|^2}{\theta}-(d+2)\right),
\end{align*}
which is exactly \eqref{eq:DlogM}.
\end{proof}

We now insert the formula from Lemma~\ref{lem_DlogM} into the time derivative of the relative entropy and obtain the key identity for $H(f^\varepsilon\mid M)$. The following identity is understood under the regularity, decay, integrability, and finiteness assumptions stated in Section~\ref{sec:assump}.

\begin{proposition}\label{prop_REidentity}
Let $f^\varepsilon$ be a smooth solution of \eqref{eq:BGK}, let $(\rho,u,\theta)$ be a smooth solution of \eqref{eq:Euler}--\eqref{eq:EulerE} with associated Maxwellian $M$ given by \eqref{eq:targetM}, and set $c:=v-u(t,x)$. Then
\begin{equation}\label{REIdentity}
\begin{split}
\frac{\rd}{\dt}H(f^\varepsilon\mid M)
= &-\frac1\varepsilon\iint_{\R^{2d}}\bigl(f^\varepsilon-\M(f^\varepsilon)\bigr)\log\frac{f^\varepsilon}{\M(f^\varepsilon)}\,\dv\,\dx \\
&-\int_{\R^d}\frac1\theta\left(\int_{\R^d}\left(c\otimes c-\frac{|c|^2}{d}I_d\right)f^\varepsilon\,\dv\right):\nabla_xu\,\dx \\
&-\frac12\int_{\R^d}\nabla_x\log\theta\cdot\left(\int_{\R^d} c\left(\frac{|c|^2}{\theta}-(d+2)\right)f^\varepsilon\,\dv\right)\dx.
\end{split}
\end{equation}
\end{proposition}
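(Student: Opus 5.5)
We compute the time derivative of $H(f^\varepsilon\mid M)$ directly, writing $f=f^\varepsilon$ for brevity, and then insert Lemma~\ref{lem_DlogM}. Differentiating under the integral sign (justified by the standing regularity and decay assumptions),
\[
\frac{\rd}{\dt}H(f\mid M)=\iint_{\R^{2d}}\partial_t f\Bigl(\log\frac{f}{M}+1\Bigr)\,\dv\,\dx-\iint_{\R^{2d}} f\,\partial_t\log M\,\dv\,\dx .
\]
The first integral is simplified with the BGK equation \eqref{eq:BGK}, writing $\partial_t f=-v\cdot\nabla_x f-\frac1\varepsilon(f-\M(f))$.

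For the transport part we integrate by parts in $x$. Since $v\cdot\nabla_x f\,(\log f+1)=v\cdot\nabla_x(f\log f)$, this term integrates to zero. Moreover,
\[
-\iint v\cdot\nabla_x f\,\log M=\iint f\,v\cdot\nabla_x\log M .
\]
Combined with the term $-\iint f\,\partial_t\log M$, the transport contribution becomes $-\iint f\,(\partial_t+v\cdot\nabla_x)\log M\,\dv\,\dx$. Substituting \eqref{eq:DlogM} and integrating first in $v$ yields exactly the second and third lines of \eqref{REIdentity}. No cancellation is needed, because the $\rho$-gradient terms were already eliminated in Lemma~\ref{lem_DlogM}.

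For the collision part the contribution is
\[
-\frac1\varepsilon\iint (f-\M(f))\Bigl(\log\frac{f}{M}+1\Bigr).
\]
The constant $1$ integrates to zero because $f$ and $\M(f)$ have the same mass. We then split $\log\frac{f}{M}=\log\frac{f}{\M(f)}+\log\frac{\M(f)}{M}$. As in the proof of Lemma~\ref{lem_pyt}, $\log\frac{\M(f)}{M}$ is of the form $a(t,x)+b(t,x)\cdot v+c(t,x)|v|^2$ on $\{\rho^\varepsilon>0\}$. Since $f-\M(f)$ has vanishing moments against $1,v,|v|^2$ pointwise in $x$, that piece integrates to zero. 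On $\{\rho^\varepsilon=0\}$ both $f$ and $\M(f)$ vanish for a.e.\ $v$ (because $f\ge0$), so nothing is lost there. What remains is the first line of \eqref{REIdentity}.

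The only delicate point is bookkeeping rather than analysis. One must check the sign conventions when moving $v\cdot\nabla_x$ onto $\log M$, and verify that the moment conditions hold locally in $x$ and not merely globally. The latter follows from the definitions \eqref{rho_e}--\eqref{theta_e}, which match $\M(f)$'s moments with those of $f$ at each $x$.
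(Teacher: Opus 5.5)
Your proposal is correct and follows essentially the same route as the paper: differentiate $H(f^\varepsilon\mid M)$, use the BGK equation, let the collision-invariant part $\log(\M(f^\varepsilon)/M)$ integrate to zero against $f^\varepsilon-\M(f^\varepsilon)$, integrate the transport term by parts, and substitute Lemma~\ref{lem_DlogM}. The only cosmetic difference is how you dispose of the $+1$ coming from $\partial_t(f\log f)$: you use the zero mass of $f^\varepsilon-\M(f^\varepsilon)$ and the divergence structure of the transport term, whereas the paper invokes conservation of total mass. One small bookkeeping remark: the term that actually arises is $+\iint (v\cdot\nabla_x f)\log M\,\dv\,\dx=-\iint f\,v\cdot\nabla_x\log M\,\dv\,\dx$. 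Your displayed identity is this with both sides negated, and it is this sign that gives your (correct) final expression $-\iint f(\partial_t+v\cdot\nabla_x)\log M\,\dv\,\dx$.
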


\begin{proof}
We differentiate $H(f^\varepsilon\mid M)$ in time and obtain
\begin{equation}\label{re1}
\frac{\rd}{\rd t}H(f^\varepsilon\mid M)
=\iint_{\R^{2d}} \partial_t f^\varepsilon\,\log\frac{f^\varepsilon}{M}\,\dv\,\dx
+\iint_{\R^{2d}} f^\varepsilon\,\partial_t\log\frac{f^\varepsilon}{M}\,\dv\,\dx.
\end{equation}
Using the BGK equation in the first term, we find
\begin{equation}\label{re11}
\begin{split}
\iint_{\R^{2d}} \partial_t f^\varepsilon\,\log\frac{f^\varepsilon}{M}\,\dv\,\dx
=&-\frac1\varepsilon\iint_{\R^{2d}}\bigl(f^\varepsilon-\M(f^\varepsilon)\bigr)\log\frac{f^\varepsilon}{M}\,\dv\,\dx \\
&-\iint_{\R^{2d}} (v\cdot\nabla_xf^\varepsilon)\log\frac{f^\varepsilon}{M}\,\dv\,\dx.
\end{split}
\end{equation}
Now
\[
\log\frac{f^\varepsilon}{M}
=\log\frac{f^\varepsilon}{\M(f^\varepsilon)}+\log\frac{\M(f^\varepsilon)}{M},
\]
and, as in the proof of Lemma~\ref{lem_pyt}, \[
\log\frac{\M(f^\varepsilon)}{M}=a_0(t,x)+b_0(t,x)\cdot v+c_0(t,x)|v|^2
\]
for suitable scalar functions $a_0,c_0$ and vector field $b_0$.
Since $f^\varepsilon$ and $\M(f^\varepsilon)$ have the same moments against these collision invariants, we get
\[
\iint_{\R^{2d}}\bigl(f^\varepsilon-\M(f^\varepsilon)\bigr)\log\frac{\M(f^\varepsilon)}{M}\,\dv\,\dx=0.
\]
Integrating by parts in $x$ in the transport term gives
\begin{equation}\label{re11b}
\begin{split}
\iint_{\R^{2d}} \partial_t f^\varepsilon\,\log\frac{f^\varepsilon}{M}\,\dv\,\dx
=&-\frac1\varepsilon\iint_{\R^{2d}}\bigl(f^\varepsilon-\M(f^\varepsilon)\bigr)\log\frac{f^\varepsilon}{\M(f^\varepsilon)}\,\dv\,\dx \\
&-\iint_{\R^{2d}} v f^\varepsilon\cdot\nabla_x\log M\,\dv\,\dx.
\end{split}
\end{equation}
For the second term in \eqref{re1}, conservation of total mass yields
\begin{equation}\label{re12}
\begin{split}
\iint_{\R^{2d}} f^\varepsilon\,\partial_t\log\frac{f^\varepsilon}{M}\,\dv\,\dx
&=\iint_{\R^{2d}} f^\varepsilon\,\partial_t(\log f^\varepsilon-\log M)\,\dv\,\dx \\
&=\frac{\rd}{\rd t}\iint_{\R^{2d}} f^\varepsilon\,\dv\,\dx
-\iint_{\R^{2d}} f^\varepsilon\,\partial_t\log M\,\dv\,\dx \\
&=-\iint_{\R^{2d}} f^\varepsilon\,\partial_t\log M\,\dv\,\dx.
\end{split}
\end{equation}
Combining \eqref{re11b} and \eqref{re12} gives
\begin{equation}\label{re2}
\begin{split}
\frac{\rd}{\rd t}H(f^\varepsilon\mid M)
=&-\frac1\varepsilon\iint_{\R^{2d}}\bigl(f^\varepsilon-\M(f^\varepsilon)\bigr)\log\frac{f^\varepsilon}{\M(f^\varepsilon)}\,\dv\,\dx \\
&-\iint_{\R^{2d}} f^\varepsilon(\partial_t+v\cdot\nabla_x)\log M\,\dv\,\dx.
\end{split}
\end{equation}
The identity \eqref{REIdentity} now follows by substituting \eqref{eq:DlogM} from Lemma~\ref{lem_DlogM}.
\end{proof}

\section{Auxiliary lemmas} \label{sec:aux}

In this section we record three auxiliary lemmas used in the proof of Theorem~\ref{thm_BGK-E}. They concern, respectively, the control of a weighted $L^2$-distance by the symmetrized relative entropy, the computation of certain Maxwellian moments, and a quadratic estimate for $r-1-\log r$ on bounded intervals.

\begin{lemma}\label{lem_tri}
For $a,b>0$,
\begin{equation}\label{eq:tri_point}
\frac{2(a-b)^2}{a+b}\le(a-b)\log\frac{a}{b}.
\end{equation}
Consequently, for nonnegative $f,g$ with $f+g>0$ a.e.,
\begin{equation}\label{eq:tri}
\begin{split}
\iint_{\R^{2d}}\frac{(f-g)^2}{f+g}\,\dv\,\dx
& \le\frac12\iint_{\R^{2d}}(f-g)\log\frac{f}{g}\,\dv\,\dx \\
& =\frac12\bigl(H(f\mid g)+H(g\mid f)\bigr).
\end{split}
\end{equation}
\end{lemma}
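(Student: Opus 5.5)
The plan is to prove the pointwise inequality \eqref{eq:tri_point} by a one-variable reduction, and then integrate it to obtain \eqref{eq:tri}.

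\emph{Pointwise inequality.} Both sides of \eqref{eq:tri_point} are symmetric in $(a,b)$ and homogeneous of degree one, and both vanish when $a=b$. We may therefore assume $a\neq b$ and set $r=a/b>0$, $r\neq1$. Dividing by $b(r-1)$ turns the claim into
\[
\frac{2(r-1)}{r+1}\le\log r\ \text{ for } r>1,
\qquad
\log r\le\frac{2(r-1)}{r+1}\ \text{ for } 0<r<1 .
\]
Let $\phi(r)=\log r-2(r-1)/(r+1)$. Then $\phi(1)=0$ and
\[
\phi'(r)=\frac1r-\frac{4}{(r+1)^2}=\frac{(r-1)^2}{r(r+1)^2}\ge0,
\]
so $\phi$ is nondecreasing. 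Hence $\phi$ has the sign of $r-1$, which gives both inequalities at once.

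\emph{Integrated form.} Take nonnegative $f,g$ with $f+g>0$ a.e. On the set where both $f>0$ and $g>0$, integrate \eqref{eq:tri_point} with $a=f$, $b=g$ and divide by $2$. On the set where exactly one of the two vanishes, the left integrand $(f-g)^2/(f+g)$ equals $f$ or $g$, which is finite. The right integrand $(f-g)\log(f/g)$ equals $+\infty$ there under the natural convention, so the inequality holds trivially on this set. If the symmetrized entropy is finite, this set must in fact be null, since $H(f\mid g)$ and $H(g\mid f)$ cannot both be finite unless $\{f=0\}$ and $\{g=0\}$ agree up to null sets.

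For the final equality, expand
\[
(f-g)\log\frac fg=f\log\frac fg+g\log\frac gf .
\]
Integrating gives $H(f\mid g)+H(g\mid f)$, provided both entropies are finite so that the splitting of the integral is legitimate. This is the standing finiteness assumption of Section~\ref{sec:assump}.

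The only point needing care is this bookkeeping on zero sets and the finiteness needed to split the integral. The analytic content reduces entirely to the monotonicity of $\phi$.
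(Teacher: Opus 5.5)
Your proof is correct, but you prove the pointwise inequality by a different route than the paper.

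The paper uses the integral representation $\log\frac ab=(a-b)\int_0^1\frac{\rd\tau}{\tau a+(1-\tau)b}$, so that $(a-b)\log\frac ab=(a-b)^2\int_0^1\frac{\rd\tau}{\tau a+(1-\tau)b}$. It then applies Jensen's inequality to the convex function $r\mapsto 1/r$, bounding the integral from below by $1/\int_0^1(\tau a+(1-\tau)b)\,\rd\tau=2/(a+b)$. You instead reduce by homogeneity to $r=a/b$ and check that $\phi(r)=\log r-2(r-1)/(r+1)$ is nondecreasing with $\phi(1)=0$.

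Both arguments amount to the classical fact that the logarithmic mean $(a-b)/\log(a/b)$ is at most the arithmetic mean $(a+b)/2$. The paper's version is symmetric in $(a,b)$ and needs no case split on the sign of $a-b$, since the nonnegative factor $(a-b)^2$ appears automatically. Yours is a completely elementary derivative computation, at the price of treating $r>1$ and $r<1$ separately.

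For the integrated form, the paper simply integrates the pointwise bound. Your bookkeeping on the sets where exactly one of $f,g$ vanishes is more careful than what the paper writes.

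One small point: finiteness of the two entropies is not actually needed to split the integral. Pointwise, $(f-g)\log\frac fg=\bigl[f\log\frac fg-f+g\bigr]+\bigl[g\log\frac gf-g+f\bigr]$ with both brackets nonnegative, so for integrable $f,g$ the identity holds in $[0,\infty]$.
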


\begin{proof}
Since
\[
\log\frac{a}{b}=(a-b)\int_0^1 \frac{1}{\tau a+(1-\tau)b}\,\rd\tau,
\]
we have
\[
(a-b)\log\frac{a}{b}=(a-b)^2\int_0^1 \frac{1}{\tau a+(1-\tau)b}\,\rd\tau.
\]
The function $r\mapsto1/r$ is convex on $(0,\infty)$, hence Jensen's inequality yields
\[
\int_0^1 \frac{1}{\tau a+(1-\tau)b}\,\rd\tau
\ge \frac{1}{\int_0^1 (\tau a+(1-\tau)b)\,\rd\tau}=\frac{2}{a+b}.
\]
This proves~\eqref{eq:tri_point}. Integrating the pointwise inequality with $a=f$ and $b=g$ gives~\eqref{eq:tri}.
\end{proof}

\begin{lemma}\label{lem_maxmom}
Let $G=\M(f^\varepsilon)=M_{\rho^\varepsilon,u^\varepsilon,\theta^\varepsilon}$ and let $M=M_{\rho,u,\theta}$ be the target Maxwellian.
Set $c := v - u(t,x)$ and $\delta u:=u^\varepsilon-u$.
Then
\begin{equation}\label{eq:maxmom1}
\int_{\R^d}\left(c\otimes c-\frac{|c|^2}{d}I_d\right)G\,\dv
=\rho^\varepsilon\left(\delta u\otimes\delta u-\frac{|\delta u|^2}{d}I_d\right),
\end{equation}
and
\begin{equation}\label{eq:maxmom2}
\int_{\R^d}c\left(\frac{|c|^2}{\theta}-(d+2)\right)G\,\dv
=\rho^\varepsilon\,\delta u\left((d+2)\left(\frac{\theta^\varepsilon}{\theta}-1\right)+\frac{|\delta u|^2}{\theta}\right).
\end{equation}
\end{lemma}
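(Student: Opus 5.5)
The plan is to compute both velocity moments of the Gaussian $G$ directly, after shifting to the variable centred at $u^\varepsilon$. We work pointwise in $(t,x)$. On $\{\rho^\varepsilon=0\}$ we have $G\equiv0$, so both sides of \eqref{eq:maxmom1} and \eqref{eq:maxmom2} vanish. We may therefore assume $\rho^\varepsilon>0$.

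Write $w:=v-u^\varepsilon$, so that $c=w+\delta u$. The standard Gaussian moments of $G$ in $w$ are
\[
\int G\,\dv=\rho^\varepsilon,\qquad \int w\,G\,\dv=0,\qquad \int w\otimes w\,G\,\dv=\rho^\varepsilon\theta^\varepsilon I_d .
\]
All odd moments vanish by symmetry $w\mapsto -w$, which in particular gives $\int w\,|w|^2G\,\dv=0$ and $\int w_iw_jw_k\,G\,\dv=0$.

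For \eqref{eq:maxmom1}, expand
\[
c\otimes c=w\otimes w+w\otimes\delta u+\delta u\otimes w+\delta u\otimes\delta u .
\]
Integrating against $G$ gives $\rho^\varepsilon\theta^\varepsilon I_d+\rho^\varepsilon\,\delta u\otimes\delta u$. Taking the trace yields $\int|c|^2G\,\dv=d\rho^\varepsilon\theta^\varepsilon+\rho^\varepsilon|\delta u|^2$. Subtracting $\tfrac1d$ times this multiple of $I_d$ cancels the isotropic $\theta^\varepsilon$ part and leaves exactly the traceless tensor in \eqref{eq:maxmom1}.

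For \eqref{eq:maxmom2}, first compute $\int c\,|c|^2G\,\dv$. Use
\[
|c|^2=|w|^2+2w\cdot\delta u+|\delta u|^2
\]
and multiply by $c=w+\delta u$. After removing the odd terms, the surviving contributions are
\[
\int 2w(w\cdot\delta u)G\,\dv=2\rho^\varepsilon\theta^\varepsilon\,\delta u,\qquad \int\delta u\,|w|^2G\,\dv=d\rho^\varepsilon\theta^\varepsilon\,\delta u,\qquad \int \delta u\,|\delta u|^2G\,\dv=\rho^\varepsilon|\delta u|^2\delta u .
\]
Hence
\[
\int c\,|c|^2G\,\dv=\rho^\varepsilon\,\delta u\bigl((d+2)\theta^\varepsilon+|\delta u|^2\bigr).
\]
We also have $\int c\,G\,\dv=\rho^\varepsilon\delta u$. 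Dividing the cubic moment by $\theta$ and subtracting $(d+2)\rho^\varepsilon\delta u$ gives \eqref{eq:maxmom2}.

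No step is a genuine obstacle. The only point requiring care is the bookkeeping in the cubic moment, namely correctly identifying the coefficient $d+2$ arising from the two cross terms. The computation is purely algebraic and uses only the Gaussian moment identities of $G$, which hold because $\theta^\varepsilon>0$ under (A).
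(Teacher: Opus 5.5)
Your proposal is correct and follows essentially the same route as the paper: shift to $\omega=v-u^\varepsilon$ so that $c=\omega+\delta u$, expand, and use the Gaussian moments of $G$ (including the vanishing of odd moments) to obtain $\rho^\varepsilon\theta^\varepsilon I_d+\rho^\varepsilon\,\delta u\otimes\delta u$ and $\rho^\varepsilon\,\delta u\bigl((d+2)\theta^\varepsilon+|\delta u|^2\bigr)$. Your separate treatment of $\{\rho^\varepsilon=0\}$ is harmless but not needed, since there $G\equiv0$ and the moment identities hold trivially.
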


\begin{proof} Write $c=(v-u^\varepsilon)+(u^\varepsilon-u)=:\omega+\delta u$. Under the Maxwellian $G$ we have 
\[ \int_{\R^d}\omega\,G\,\dv=0, \qquad \int_{\R^d}\omega\otimes\omega\,G\,\dv=\rho^\varepsilon\theta^\varepsilon I_d, \qquad \int_{\R^d}|\omega|^2\,G\,\dv= d \rho^\varepsilon\theta^\varepsilon. \]
Expanding $c\otimes c$ and $|c|^2$ immediately gives~\eqref{eq:maxmom1}. 

For~\eqref{eq:maxmom2}, we write \[ c\left(\frac{|c|^2}{\theta}-(d+2)\right) =(\omega+\delta u)\left(\frac{|\omega|^2+2\omega\cdot\delta u+|\delta u|^2}{\theta}-(d+2)\right). \] 
After integrating against $G$ we are left with \begin{align*} 
\int_{\R^d}c\left(\frac{|c|^2}{\theta}-(d+2)\right)G\,\dv &=\frac{\delta u}{\theta}\int_{\R^d}|\omega|^2 G\,\dv +\frac2\theta\int_{\R^d}\omega(\omega\cdot\delta u)G\,\dv \\ &\qquad+\frac{|\delta u|^2}{\theta}\,\delta u\int_{\R^d}G\,\dv-(d+2)\delta u\int_{\R^d}G\,\dv. 
\end{align*} 
Using 
\[\int_{\R^d}\omega(\omega\cdot\delta u)G\,\dv=\rho^\varepsilon\theta^\varepsilon\delta u, \qquad \int_{\R^d}G\,\dv=\rho^\varepsilon, \] 
we obtain \eqref{eq:maxmom2}.
 \end{proof}

\begin{lemma}\label{lem_tempquad}
Fix $\overline r>0$. Then there exists $C=C(\overline r)>0$ such that
\[
(r-1)^2\le C\bigl(r-1-\log r\bigr)
\qquad\text{for all }r\in(0,\overline r].
\]
\end{lemma}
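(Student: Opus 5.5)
The idea is to study the function $\phi(r):=r-1-\log r$ on $(0,\infty)$ and compare it with $(r-1)^2$ through the ratio
\[
q(r):=\frac{(r-1)^2}{r-1-\log r},\qquad r\in(0,\overline r]\setminus\{1\}.
\]
It then suffices to show that $q$ is bounded on $(0,\overline r]\setminus\{1\}$ and to take $C:=\sup q$. At $r=1$ the inequality is trivial, since both sides vanish. First, $\phi(r)>0$ for $r\neq1$. Indeed, $\phi'(r)=1-1/r$ is negative on $(0,1)$ and positive on $(1,\infty)$, so $r=1$ is the unique strict minimum, where $\phi(1)=0$. Hence $q$ is well defined and continuous on $(0,1)\cup(1,\overline r]$.

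Next I would check the behaviour of $q$ at the three delicate points $r\to1$, $r\to0^+$, and (if $\overline r>1$) up to $r=\overline r$.

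Near $r=1$, Taylor expansion gives $\phi(r)=\tfrac12(r-1)^2+O((r-1)^3)$, since $\phi''(1)=1$. Therefore $q(r)\to2$, and $q$ extends continuously to $r=1$. Alternatively, one can avoid Taylor expansion by using the integral representation
\[
\phi(r)=(r-1)^2\int_0^1\frac{\tau}{1+\tau(r-1)}\,\rd\tau\cdot\frac{1}{1}
\]
(derived from $\log r=\int_0^1 \frac{r-1}{1+s(r-1)}\,\rd s$). This representation shows directly that
\[
q(r)=\Bigl(\int_0^1\frac{\tau}{1+\tau(r-1)}\,\rd\tau\Bigr)^{-1}.
\]
For $r\le\overline r$ the denominator $1+\tau(r-1)$ is at most $\max(1,\overline r)$, so the integral is at least $\frac{1}{2\max(1,\overline r)}$. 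This yields the explicit constant $C=2\max(1,\overline r)$ uniformly on $(0,\overline r]$, including near $r=0$, and I would use this cleaner route as the main argument. For completeness, as $r\to0^+$ we have $\phi(r)\to+\infty$ while $(r-1)^2\to1$, so $q\to0$; this is consistent with the explicit bound.

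The only step requiring care is verifying the integral identity $r-1-\log r=(r-1)^2\int_0^1\frac{\tau}{1+\tau(r-1)}\,\rd\tau$, i.e. the remainder formula for $\log$ at $1$. It follows from Taylor's formula with integral remainder for $g(\tau)=\log(1+\tau(r-1))$, namely $g(1)=g(0)+g'(0)+\int_0^1(1-\tau)g''(\tau)\,\rd\tau$. This gives
\[
r-1-\log r=(r-1)^2\int_0^1\frac{1-\tau}{(1+\tau(r-1))^2}\,\rd\tau.
\]
I would therefore use this corrected form, with weight $(1-\tau)$ and a squared denominator. Bounding the squared denominator by $\max(1,\overline r)^2$ gives the constant $C=2\max(1,\overline r)^2$. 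I expect no real obstacle beyond getting this remainder formula right.
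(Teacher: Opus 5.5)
Your argument is correct and is essentially the paper's proof: a second-order Taylor expansion of $\psi(r)=r-1-\log r$ about $r=1$, with the remainder bounded below because $1+\tau(r-1)$ is a convex combination of $1$ and $r$, hence positive and at most $\max\{1,\overline r\}$; you use the integral form of the remainder where the paper uses the Lagrange form. Your first identity $r-1-\log r=(r-1)^2\int_0^1\frac{\tau}{1+\tau(r-1)}\,\rd\tau$ was already correct (it follows from $1-\frac{1}{1+s(r-1)}=\frac{s(r-1)}{1+s(r-1)}$ and equals your second form after integrating by parts), so the "correction" was unnecessary, and that first form gives the sharper $C=2\max\{1,\overline r\}$. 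Your squared-denominator form, in turn, correctly reflects $\psi''(r)=1/r^2$ and gives $C=2\max\{1,\overline r\}^2$; the paper writes $\psi''(r)=1/r$, and correcting that slip in its Lagrange argument yields exactly your $2\max\{1,\overline r\}^2$, which is harmless since the lemma only needs some $C(\overline r)$.
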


\begin{proof}
The function $\psi(r):=r-1-\log r$ is $C^2$ and strictly convex on $(0,\infty)$,
with $\psi(1)=\psi'(1)=0$ and $\psi''(r)=1/r$.
By Taylor's theorem about the point $r=1$, for every $r>0$ there exists
\[
\xi\in (\min\{1,r\},\max\{1,r\})
\]
such that
\[
\psi(r)=\frac12\,\psi''(\xi)(r-1)^2.
\]
If $r\in(0,\overline r]$, then $\xi\le \max\{1,\overline r\}$, hence
\[
\psi(r)\ge \frac{1}{2\max\{1,\overline r\}}(r-1)^2.
\]
This proves the claim.
\end{proof}

\section{Proof of Theorem~\ref{thm_BGK-E}} \label{sec:proofThm}
Throughout this proof, $C$ denotes a positive constant, possibly changing from line to line, depending only on the quantities appearing in the statement of Theorem~\ref{thm_BGK-E}.

Set $c:=v - u$, $G:=\M(f^\varepsilon)$ and
\begin{equation}\label{eq:De}
D_\varepsilon(t):=\iint_{\R^{2d}}\bigl(f^\varepsilon-G\bigr)\log\frac{f^\varepsilon}{G}\,\dv\,\dx
=H(f^\varepsilon\mid G)+H(G\mid f^\varepsilon)\ge0.
\end{equation}
By the relative entropy identity in Proposition~\ref{prop_REidentity},
\begin{equation}\label{eq:REid2}
\frac{\rd}{\dt}H(f^\varepsilon\mid M)
=-\frac1\varepsilon D_\varepsilon(t)+\mathcal R_u(t)+\mathcal R_\theta(t),
\end{equation}
where
\begin{align*}
\mathcal R_u(t)
&:=-\int_{\R^d}\frac1\theta\left(\int_{\R^d}\left(c\otimes c-\frac{|c|^2}{d}I_d\right)f^\varepsilon\,\dv\right):\nabla_xu\,\dx,\\
\mathcal R_\theta(t)
&:=-\frac12\int_{\R^d}\nabla_x\log\theta\cdot\left(\int_{\R^d}c\left(\frac{|c|^2}{\theta}-(d+2)\right)f^\varepsilon\,\dv\right)\dx.
\end{align*}
We split each remainder into a Maxwellian part and an extra term,
\[
\mathcal R_u=\mathcal R_u^{\mathrm M}+\mathcal R_u^{\mathrm{E}},
\qquad
\mathcal R_\theta=\mathcal R_\theta^{\mathrm M}+\mathcal R_\theta^{\mathrm{E}},
\]
where the Maxwellian part is obtained by replacing $f^\varepsilon$ with $G=\M(f^\varepsilon)$, and the extra term is obtained by replacing $f^\varepsilon$ with $f^\varepsilon-G$.

\medskip
\noindent\textbf{Step 1: Maxwellian parts.}
Using Lemma~\ref{lem_maxmom}, we obtain
\[
\mathcal R_u^{\mathrm M}
=-\int_{\R^d}\frac{\rho^\varepsilon}{\theta}
\left(\delta u\otimes\delta u-\frac{|\delta u|^2}{d}I_d\right):\nabla_xu\,\dx,
\qquad
\delta u:=u^\varepsilon-u.
\]
Since $\theta\ge\underline\theta$ and
$\bigl|\delta u\otimes\delta u-(|\delta u|^2/d)I_d\bigr|\le2|\delta u|^2$,
we infer
\begin{equation}\label{eq:RuMbd}
|\mathcal R_u^{\mathrm M}|
\le \frac{2}{\underline\theta}\|\nabla_xu\|_\infty\int_{\R^d}\rho^\varepsilon|\delta u|^2\,\dx.
\end{equation}
On the other hand, Lemma~\ref{lem_maxmax} gives
\begin{align*}
H(G\mid M)
= \ & \int_{\R^d}\rho^\varepsilon\log\frac{\rho^\varepsilon}{\rho}\,\dx 
+\frac12\int_{\R^d}\rho^\varepsilon\frac{|\delta u|^2}{\theta}\,\dx \\
& +\frac d2\int_{\R^d}\rho^\varepsilon\left(\frac{\theta^\varepsilon}{\theta}-1-\log\frac{\theta^\varepsilon}{\theta}\right)\dx,
\end{align*}
whence
\[
\int_{\R^d}\rho^\varepsilon|\delta u|^2\,\dx
\le 2\overline\theta\,H(G\mid M).
\]
Therefore,
\begin{equation}\label{eq:RuMbd2}
|\mathcal R_u^{\mathrm M}|\le C\,H(G\mid M),
\end{equation}
where $C$ depends only on $\underline\theta$, $\overline\theta$, and $\|\nabla_xu\|_\infty$.

Now, set $r:=\theta^\varepsilon/\theta$. By Lemma~\ref{lem_maxmom},
\[
\mathcal R_\theta^{\mathrm M}
=
-\frac12\int_{\R^d}\rho^\varepsilon\,\delta u\cdot\nabla_x\log\theta
\left((d+2)(r-1)+\frac{|\delta u|^2}{\theta}\right)\dx.
\]
Hence
\begin{align*}
|\mathcal R_\theta^{\mathrm M}|
&\le \frac12\|\nabla_x\log\theta\|_\infty
\int_{\R^d}\rho^\varepsilon |\delta u|
\left((d+2)|r-1|+\frac{|\delta u|^2}{\theta}\right)\dx \\
&= \frac12\|\nabla_x\log\theta\|_\infty
\int_{\R^d}\rho^\varepsilon
\left((d+2)|\delta u|\,|r-1|+\frac{|\delta u|^3}{\theta}\right)\dx.
\end{align*}
Moreover,
\[
|\delta u|\le \|u^\varepsilon\|_\infty+\|u\|_\infty\le U+\|u\|_\infty.
\]
Using $|ab|\le\frac12(a^2+b^2)$ and $\theta\ge\underline\theta$, we obtain
\[
|\delta u|\,|r-1|
\le \frac12\bigl(|\delta u|^2+(r-1)^2\bigr),
\]
and
\[
\frac{|\delta u|^3}{\theta}
\le \frac{1}{\underline\theta}|\delta u|^3
\le \frac{U+\|u\|_\infty}{\underline\theta}\,|\delta u|^2.
\]
Substituting these bounds into the previous estimate yields
\begin{align*}
|\mathcal R_\theta^{\mathrm M}|
&\le \frac12\|\nabla_x\log\theta\|_\infty
\int_{\R^d}\rho^\varepsilon
\left(\frac{d+2}{2}\bigl(|\delta u|^2+(r-1)^2\bigr)
+\frac{U+\|u\|_\infty}{\underline\theta}|\delta u|^2\right)\dx \\
&\le C\|\nabla_x\log\theta\|_\infty
\int_{\R^d}\rho^\varepsilon\bigl(|\delta u|^2+(r-1)^2\bigr)\dx,
\end{align*}
where $C$ depends on $d$, $\underline\theta$, $U$, and $\|u\|_\infty$.

Since $0<\theta^\varepsilon\le \overline\Theta$ and $\theta\ge \underline\theta$, the ratio $r=\theta^\varepsilon/\theta$
satisfies $0<r\le \frac{\overline\Theta}{\underline\theta}$. Applying Lemma~\ref{lem_tempquad} with $\overline r=\overline\Theta/\underline\theta$, we obtain
\[
(r-1)^2\le C\left(r-1-\log r\right),
\]
where $C=C(\overline\Theta/\underline\theta)$.
Invoking again Lemma~\ref{lem_maxmax}, we conclude that
\begin{equation}\label{eq:RthetaMbd}
|\mathcal R_\theta^{\mathrm M}|\le C\,H(G\mid M),
\end{equation}
where now $C$ depends on
$d$, $\underline\theta$, $\overline\theta$, $U$, $\overline\Theta$,
$\|u\|_\infty$, and $\|\nabla_x\log\theta\|_\infty$.

Combining \eqref{eq:RuMbd2} and \eqref{eq:RthetaMbd} gives
\begin{equation}\label{eq:Rmacro}
|\mathcal R_u^{\mathrm M}|+|\mathcal R_\theta^{\mathrm M}|\le C\,H(G\mid M).
\end{equation}

\medskip
\noindent\textbf{Step 2: Extra terms.}
Set $h:=f^\varepsilon-G$. By definition of the extra terms, we have
\[
\mathcal R_u^{\mathrm E}=-\iint_{\R^{2d}}\Phi_u\,h\,\dv\,\dx,
\qquad
\mathcal R_\theta^{\mathrm E}=-\iint_{\R^{2d}}\Phi_\theta\,h\,\dv\,\dx,
\]
where
\[
\Phi_u:=\frac1\theta\left(c\otimes c-\frac{|c|^2}{d}I_d\right):\nabla_xu,
\qquad
\Phi_\theta:=\frac12\,\nabla_x\log\theta\cdot c\left(\frac{|c|^2}{\theta}-(d+2)\right).
\]

For any measurable $\Phi$, we may write
\[
\iint_{\R^{2d}}\Phi h\,\dv\,\dx
=
\iint_{\R^{2d}}
\Bigl(\Phi\sqrt{f^\varepsilon+G}\Bigr)
\Bigl(\frac{h}{\sqrt{f^\varepsilon+G}}\Bigr)\,\dv\,\dx,
\]
and hence, by Cauchy--Schwarz,
\begin{equation}\label{eq:CSw}
\left|\iint_{\R^{2d}}\Phi h\,\dv\,\dx\right|
\le
\left(\iint_{\R^{2d}}\Phi^2(f^\varepsilon+G)\,\dv\,\dx\right)^{1/2}
\left(\iint_{\R^{2d}}\frac{h^2}{f^\varepsilon+G}\,\dv\,\dx\right)^{1/2},
\end{equation}
with the convention $h^2/(f^\varepsilon+G)=0$ on $\{f^\varepsilon+G=0\}$. 

By Lemma~\ref{lem_tri} and \eqref{eq:De},
\begin{equation}\label{eq:tri_use}
\iint_{\R^{2d}}\frac{h^2}{f^\varepsilon+G}\,\dv\,\dx
\le \frac12\,D_\varepsilon(t).
\end{equation}

We next estimate the growth of $\Phi_u$ and $\Phi_\theta$ in $v$.
Since
\[
|c|=|v-u|\le |v|+\|u\|_\infty,
\]
and since $0<\underline\theta\le \theta\le \overline\theta$, we have
\begin{align*}
|\Phi_u|
&\le \frac{\|\nabla_xu\|_\infty}{\underline\theta}
\left|c\otimes c-\frac{|c|^2}{d}I_d\right| \\
&\le C|c|^2 \\
&\le C(1+|v|^2).
\end{align*}
Similarly,
\begin{align*}
|\Phi_\theta|
&\le \frac12\|\nabla_x\log\theta\|_\infty\,|c|
\left(\frac{|c|^2}{\underline\theta}+d+2\right) \\
&\le C\bigl(|c|+|c|^3\bigr) \\
& \le C(1+|v|^3),
\end{align*}
where in the last step we used the inequality $|v| \leq 1 + |v|^3$.

Therefore,
\begin{equation}\label{eq:Phi_growth}
\Phi_u^2+\Phi_\theta^2\le C\bigl(1+|v|^6\bigr),
\end{equation}
where $C$ depends only on
$d$, $\underline\theta$, $\|u\|_\infty$, $\|\nabla_xu\|_\infty$, and $\|\nabla_x\log\theta\|_\infty$.

Using \eqref{eq:Phi_growth}, the unit mass of $f^\varepsilon$, and the assumed sixth
moment bound, we get
\[
\iint_{\R^{2d}}(\Phi_u^2+\Phi_\theta^2)f^\varepsilon\,\dv\,\dx
\le
C\iint_{\R^{2d}}(1+|v|^6)f^\varepsilon\,\dv\,\dx
\le C.
\]

We now estimate the same quantity with $G$. Writing $\omega:=v-u^\varepsilon$,
we have
\[
G
=
\frac{\rho^\varepsilon}{(2\pi\theta^\varepsilon)^{d/2}}
\exp\!\left(\!-\frac{|\omega|^2}{2\theta^\varepsilon}\right).
\]
Hence
\begin{align*}
\int_{\R^d}|v|^6 G\,\dv
&=
\frac{\rho^\varepsilon}{(2\pi\theta^\varepsilon)^{d/2}}
\int_{\R^d}|\omega+u^\varepsilon|^6
\exp\!\left(-\frac{|\omega|^2}{2\theta^\varepsilon}\right)\dw \\
&\le
C\,\frac{\rho^\varepsilon}{(2\pi\theta^\varepsilon)^{d/2}}
\int_{\R^d}\bigl(|\omega|^6+|u^\varepsilon|^6\bigr)
\exp\!\left(-\frac{|\omega|^2}{2\theta^\varepsilon}\right)\dw,
\end{align*}
where we used the elementary inequality $|a+b|^6\le C\bigl(|a|^6+|b|^6\bigr)$.
Since $|u^\varepsilon|\le U$ and $\theta^\varepsilon\le\overline\Theta$, a change of
variables $\omega=\sqrt{\theta^\varepsilon}z$ shows that
\[
\frac{1}{(2\pi\theta^\varepsilon)^{d/2}}
\int_{\R^d}|\omega|^6
\exp\!\left(-\frac{|\omega|^2}{2\theta^\varepsilon}\right)\dw
=
(\theta^\varepsilon)^3(2\pi)^{-d/2}\int_{\R^d}|z|^6e^{-|z|^2/2}\, \dz
\le C,
\]
and also
\[
\frac{1}{(2\pi\theta^\varepsilon)^{d/2}}
\int_{\R^d}
\exp\!\left(-\frac{|\omega|^2}{2\theta^\varepsilon}\right)\dw
=1.
\]
Therefore
\[
\int_{\R^d}|v|^6 G\,\dv\le C\,\rho^\varepsilon,
\]
with $C=C(d,U,\overline\Theta)$.
Using \eqref{eq:Phi_growth} again and $\int_{\R^d}\rho^\varepsilon\,\dx=1$, we conclude that
\[
\iint_{\R^{2d}}(\Phi_u^2+\Phi_\theta^2)G\,\dv\,\dx\le C.
\]

Combining the previous two bounds, we obtain
\[
\iint_{\R^{2d}}(\Phi_u^2+\Phi_\theta^2)(f^\varepsilon+G)\,\dv\,\dx\le C,
\]
for some constant $C$ independent of $\varepsilon$ and $t$.
Applying \eqref{eq:CSw} with $\Phi=\Phi_u$ and $\Phi=\Phi_\theta$, and using
\eqref{eq:tri_use}, we infer that
\begin{align*}
|\mathcal R_u^{\mathrm E}|+|\mathcal R_\theta^{\mathrm E}|
&\le
\left(\iint_{\R^{2d}}\Phi_u^2(f^\varepsilon+G)\,\dv\,\dx\right)^{1/2}
\left(\frac12D_\varepsilon(t)\right)^{1/2} \\
&\qquad+
\left(\iint_{\R^{2d}}\Phi_\theta^2(f^\varepsilon+G)\,\dv\,\dx\right)^{1/2}
\left(\frac12D_\varepsilon(t)\right)^{1/2} \\
&\le C\sqrt{D_\varepsilon(t)}.
\end{align*}
Finally, Young's inequality gives
\begin{equation}\label{eq:Rmic2}
|\mathcal R_u^{\mathrm E}|+|\mathcal R_\theta^{\mathrm E}|
\le \frac{1}{2\varepsilon}D_\varepsilon(t)+\frac\varepsilon2 C.
\end{equation}

\medskip
\noindent\textbf{Step 3: Gronwall estimate.}
Combining \eqref{eq:REid2}, \eqref{eq:Rmacro}, and \eqref{eq:Rmic2}, we obtain
\[
\frac{\rd}{\dt}H(f^\varepsilon\mid M)
\le -\frac1\varepsilon D_\varepsilon(t)+\frac{1}{2\varepsilon}D_\varepsilon(t)+C\,H(G\mid M)+C\varepsilon.
\]
By Lemma~\ref{lem_pyt},
\[
H(f^\varepsilon\mid M)=H(f^\varepsilon\mid G)+H(G\mid M),
\]
so that $H(G\mid M)\le H(f^\varepsilon\mid M)$.
Therefore,
\begin{equation}\label{eq:RE_aux}
\frac{\rd}{\dt}H(f^\varepsilon\mid M)\le C\,H(f^\varepsilon\mid M)+C\varepsilon,
\end{equation}
establishing~\eqref{thm_RE_inequality}.
Gronwall's lemma then yields
\begin{equation}\label{eq:stability}
\sup_{t\in[0,T)}H\big(f^\varepsilon(t)\mid M(t)\big)
\le e^{CT}\bigl(H(f^\varepsilon_0\mid M_0)+\varepsilon\bigr).
\end{equation}
By the well-preparedness assumption~\eqref{thm_conv_hyp} it follows from~\eqref{eq:stability} that
\begin{equation*}
\sup_{t\in[0,T)}H\big(f^\varepsilon(t)\mid M(t)\big)\to0
\qquad\text{as }\varepsilon\to0,
\end{equation*}
which finishes the proof. \qed

\begin{remark}
Since $M_0$ is a Maxwellian, Lemma~\ref{lem_pyt} gives
\[
H(f^\varepsilon_0\mid M_0)
=
H\big(f^\varepsilon_0\mid \M(f^\varepsilon_0)\big)
+
H\big(\M(f^\varepsilon_0)\mid M_0\big).
\]
In particular, the well-preparedness condition
\[
H(f^\varepsilon_0\mid M_0)\to0
\qquad\text{as }\varepsilon\to0
\]
is equivalent to
\[
H\big(f^\varepsilon_0\mid \M(f^\varepsilon_0)\big)\to0,
\qquad
H\big(\M(f^\varepsilon_0)\mid M_0\big)\to0,
\qquad\text{as }\varepsilon\to0.
\]
\end{remark}

\section{Proof of Corollary~\ref{cor:strongconv}} \label{sec:proofCor}

We first prove the convergence in phase space. Since both $f^\varepsilon(t)$ and $M(t)$
have unit mass for every $t\in[0,T)$, the Csisz\'ar--Kullback--Pinsker inequality gives
\[
\|f^\varepsilon(t)-M(t)\|_{L^1(\R^{2d})}^2
\le 2\,H\bigl(f^\varepsilon(t)\mid M(t)\bigr).
\]
Taking the supremum over $t\in[0,T)$ and using~\eqref{thm_conv_result}, we obtain
\[
f^\varepsilon\to M
\qquad\text{in }L^\infty(0,T;L^1(\R^{2d})).
\]

Moreover, since $M(t)$ is a Maxwellian for every $t\in[0,T)$, Lemma~\ref{lem_pyt}
yields
\[
H\bigl(f^\varepsilon(t)\mid M(t)\bigr)
=
H\bigl(f^\varepsilon(t)\mid \M(f^\varepsilon)(t)\bigr)
+
H\bigl(\M(f^\varepsilon)(t)\mid M(t)\bigr).
\]
Hence
\[
H\bigl(\M(f^\varepsilon)(t)\mid M(t)\bigr)
\le H\bigl(f^\varepsilon(t)\mid M(t)\bigr)
\qquad\text{for all }t\in[0,T).
\]
Since $\M(f^\varepsilon)(t)$ and $M(t)$ also have unit mass, the
Csisz\'ar--Kullback--Pinsker inequality gives
\[
\|\M(f^\varepsilon)(t)-M(t)\|_{L^1(\R^{2d})}^2
\le 2\,H\bigl(\M(f^\varepsilon)(t)\mid M(t)\bigr)
\le 2\,H\bigl(f^\varepsilon(t)\mid M(t)\bigr).
\]
Taking the supremum in time and using~\eqref{thm_conv_result}, we conclude that
\[
\M(f^\varepsilon)\to M
\qquad\text{in }L^\infty(0,T;L^1(\R^{2d})).
\]

We now turn to the macroscopic quantities. Since both $\M(f^\varepsilon)$ and $M$
are Maxwellians of the form \eqref{eq:targetM}, Lemma~\ref{lem_maxmax} gives
\begin{equation}\label{eq:maxmax_eps}
\begin{split}
H(\M(f^\varepsilon)\mid M)
=&\int_{\R^d}\rho^\varepsilon\log\frac{\rho^\varepsilon}{\rho}\,\dx
+\frac12\int_{\R^d}\rho^\varepsilon\frac{|u^\varepsilon-u|^2}{\theta}\,\dx \\
&+\frac d2\int_{\R^d}\rho^\varepsilon
\left(\frac{\theta^\varepsilon}{\theta}-1-\log\frac{\theta^\varepsilon}{\theta}\right)\dx.
\end{split}
\end{equation}
Since all terms on the right-hand side of \eqref{eq:maxmax_eps} are nonnegative, while
\[
H\bigl(\M(f^\varepsilon)(t)\mid M(t)\bigr)\le H\bigl(f^\varepsilon(t)\mid M(t)\bigr)
\qquad\text{for all }t\in[0,T),
\]
it follows from \eqref{thm_conv_result} that
\begin{equation}\label{eq:rhoent_to0}
\sup_{t\in[0,T)}\int_{\R^d}\rho^\varepsilon(t)\log\frac{\rho^\varepsilon(t)}{\rho(t)}\,\dx\to0,
\end{equation}
\begin{equation}\label{eq:vel_to0}
\sup_{t\in[0,T)}\int_{\R^d}\rho^\varepsilon(t)\frac{|u^\varepsilon(t)-u(t)|^2}{\theta(t)}\,\dx\to0,
\end{equation}
and
\begin{equation}\label{eq:tempent_to0}
\sup_{t\in[0,T)}\int_{\R^d}\rho^\varepsilon(t)
\left(\frac{\theta^\varepsilon(t)}{\theta(t)}-1-\log\frac{\theta^\varepsilon(t)}{\theta(t)}\right)\dx\to0.
\end{equation}

Applying the Csisz\'ar--Kullback--Pinsker inequality on $\R^d$ to the probability
densities $\rho^\varepsilon(t)$ and $\rho(t)$, we deduce from \eqref{eq:rhoent_to0} that
\[
\rho^\varepsilon\to\rho
\qquad\text{in }L^\infty(0,T;L^1(\R^d)).
\]

Next,
\begin{align*}
\|\rho^\varepsilon u^\varepsilon-\rho u\|_{L^1(\R^d)}
\le \ &  \|\rho^\varepsilon(u^\varepsilon-u)\|_{L^1(\R^d)}
   +\|u\|_\infty\|\rho^\varepsilon-\rho\|_{L^1(\R^d)} \\
\le \ &  \left(\int_{\R^d}\rho^\varepsilon\,\dx\right)^{1/2}
     \left(\int_{\R^d}\rho^\varepsilon|u^\varepsilon-u|^2\,\dx\right)^{1/2} \\
   & +\|u\|_\infty\|\rho^\varepsilon-\rho\|_{L^1(\R^d)} \\
\le \ & \overline\theta^{1/2}
\left(\int_{\R^d}\rho^\varepsilon\frac{|u^\varepsilon-u|^2}{\theta}\,\dx\right)^{1/2} \\
&+\|u\|_\infty\|\rho^\varepsilon-\rho\|_{L^1(\R^d)}.
\end{align*}
Since $\int_{\R^d}\rho^\varepsilon\,\dx=1$ and both terms on the right-hand side
converge to zero uniformly in time by~\eqref{eq:vel_to0} and the convergence of
$\rho^\varepsilon$, we obtain
\[
\rho^\varepsilon u^\varepsilon\to\rho u
\qquad\text{in }L^\infty(0,T;L^1(\R^d)).
\]

Finally, since $0<\theta^\varepsilon\le\overline\Theta$ and $\theta\ge\underline\theta$,
we have
\[
0<\frac{\theta^\varepsilon}{\theta}\le \frac{\overline\Theta}{\underline\theta}.
\]
Applying Lemma~\ref{lem_tempquad} with
\[
r=\frac{\theta^\varepsilon}{\theta},
\]
we infer that there exists a constant $C=C(\overline\Theta/\underline\theta)>0$ such that
\[
\left(\frac{\theta^\varepsilon}{\theta}-1\right)^2
\le
C\left(\frac{\theta^\varepsilon}{\theta}-1-\log\frac{\theta^\varepsilon}{\theta}\right).
\]
Since $\theta\le\overline\theta$, multiplying by $\rho^\varepsilon\theta^2$ and integrating yields
\[
\int_{\R^d}\rho^\varepsilon|\theta^\varepsilon-\theta|^2\,\dx
\le
C\,\overline\theta^2
\int_{\R^d}\rho^\varepsilon
\left(\frac{\theta^\varepsilon}{\theta}-1-\log\frac{\theta^\varepsilon}{\theta}\right)\dx.
\]
Hence~\eqref{eq:tempent_to0} implies
\[
\sup_{t\in[0,T)}\int_{\R^d}\rho^\varepsilon(t)|\theta^\varepsilon(t)-\theta(t)|^2\,\dx\to0.
\]
Therefore,
\begin{align*}
\|\rho^\varepsilon\theta^\varepsilon-\rho\theta\|_{L^1(\R^d)}
\le \ &  \|\rho^\varepsilon(\theta^\varepsilon-\theta)\|_{L^1(\R^d)}
   +\|\theta(\rho^\varepsilon-\rho)\|_{L^1(\R^d)} \\
\le \ & \left(\int_{\R^d}\rho^\varepsilon\,\dx\right)^{1/2}
     \left(\int_{\R^d}\rho^\varepsilon|\theta^\varepsilon-\theta|^2\,\dx\right)^{1/2} \\
   & +\|\theta\|_\infty\|\rho^\varepsilon-\rho\|_{L^1(\R^d)}.
\end{align*}
Since $\int_{\R^d}\rho^\varepsilon\,\dx=1$, both terms on the right-hand side converge
to zero uniformly in time, so we conclude that
\[
\rho^\varepsilon\theta^\varepsilon\to\rho\theta
\qquad\text{in }L^\infty(0,T;L^1(\R^d)).
\]
This completes the proof. \qed

\section*{Acknowledgments}
The author thanks Professor A.~E.~Tzavaras for introducing him to this topic and the anonymous referee for suggestions that improved the exposition. This publication is based upon work supported by King Abdullah University of Science and Technology (KAUST) under Award No. ORFS-CRG12-2024-6430.

\end{document}